\documentclass[11pt,a4paper]{article}
\usepackage[utf8]{inputenc}
\usepackage[english]{babel}
\usepackage[T1]{fontenc}
\usepackage{amssymb,amsmath,amsthm,bm}
\usepackage{hyperref,cleveref,xspace}
\usepackage{graphicx,color}
\usepackage{comment,todonotes,cite}
\usepackage[babel=true]{csquotes}
\usepackage{systeme}
\usepackage{mathenv}
\usepackage[overload]{empheq}
\usepackage{tikz-qtree}
\usepackage{enumerate}
\newtheorem{theorem}{Theorem}
\newtheorem{proposition}[theorem]{Proposition}

\newtheorem{lemma}[theorem]{Lemma}

\newtheorem{observation}{Observation}

\usepackage[linesnumbered,ruled,vlined]{algorithm2e}

\usepackage[left=2cm,right=2cm,top=2cm,bottom=2cm]{geometry}

\title{Enumerating $k$-arc-connected orientations}

\author{Sarah Blind\thanks{Université de Lorraine, LGIPM, F-57000 Metz, France\newline
\indent Email: \texttt{sarah.blind@univ-lorraine.fr}} \and Kolja Knauer\thanks{Departament de Matem\`atiques i Inform\`atica,
Universitat de Barcelona (UB), Barcelona, Spain}~\thanks{Aix-Marseille Univ, Universit\'e de Toulon, CNRS, LIS, Marseille, France  \newline
\indent Email: \texttt{\{kolja.knauer,petru.valicov\}@lis-lab.fr}}
\and Petru Valicov\footnotemark[2]~\thanks{LIRMM, CNRS, Université de Montpellier, Montpellier, France.}}

\begin{document}
\maketitle

% \begin{abstract}
% We give simple algorithms to enumerate the $\alpha$-orientations of a graph $G$ in delay $O(m^2)$ and to enumerate the outdegree sequences attained by $k$-arc-connected orientations of $G$ in delay $O(knm^2)$. Combining both yields an algorithm that enumerates all $k$-arc-connected orientations of $G$ in delay $O(knm^2)$ and amortized time $O(m^2)$. The latter improves over another approach using submodular flows and moreover is much simpler, since it is basically a combination of BFS searches.
% \end{abstract}

\begin{abstract}
We study the problem of enumerating the $k$-arc-connected orientations of a graph $G$, i.e., generating each exactly once. A first algorithm using submodular flow optimization is easy to state, but intricate to implement. In a second approach we present a simple algorithm with $O(knm^2)$ time delay and amortized time $O(m^2)$, which improves over the analysis of the submodular flow algorithm.
As ingredients, we obtain enumeration algorithms for the $\alpha$-orientations of a graph $G$ in $O(m^2)$ time delay and for the outdegree sequences attained by $k$-arc-connected orientations of $G$ in $O(knm^2)$ time delay.

\end{abstract}

\section{Introduction}
In an enumeration problem one receives a typically small input and wants to output every element of a typically large resulting set exactly once\footnote{We use the term \emph{enumeration} instead of the sometimes used terms \emph{generation} or \emph{listing}.}.
Since the runtime of an enumeration algorithm depends on the output, usually one measures how much it exceeds the size of the output in terms of the size of the input. More precisely, one wants to control the \emph{delay}, i.e., the maximum time between two consecutive outputs (including the time before the first and after the last output) in terms of the input or at least the average over these, called the \emph{amortized time}. Clearly, the delay is an upper bound for the amortized time.

An instance that illustrates this challenge perfectly is given an undirected (not necessarily simple) graph $G$, enumerate all its orientations with a given property. Many types of orientations have been studied with respect to their enumeration complexity, see~\cite{ConteThesis} for an overview. Here we give just some examples, before describing the set-up of this paper.

The set of all orientations of $G=(V,E)$ can be identified with the set of vectors $\{0,1\}^m$ where $m=|E|$. Thus, enumerating all orientations of $G$ using a Gray code can be done with constant delay, see~\cite{DBLP:journals/jacm/Ehrlich73}. 
It gets more interesting when enumerating \emph{acyclic} orientations, i.e. those that have no directed cycles. In~\cite{Squire98} an algorithm for enumerating all acyclic orientations with $O(n^3)$ time delay  but linear amortized time $O(n)$ was given, where $n=|V|$. In~\cite{Barbosa} the delay was reduced to $O(mn)$ with an increase in amortized time to $O(m+n)$. Another improvement was obtained in~\cite{Conte2018}, where an algorithm of delay and amortized time $O(m)$ is given. 

\emph{Strongly connected} orientations are those such that for any two vertices $u,v\in V$ there is a directed path from $u$ to $v$. In~\cite{Con-16} an enumeration algorithm of strongly connected orientations with $O(m)$ time delay is given.
The main objective of the present paper is to enumerate a parameterized version of strong orientations, generalizing the above. Namely, we enumerate the \emph{$k$-arc-connected orientations} of $G$, i.e., those where at least $k$ arcs have to be removed in order to destroy the strong connectivity\footnote{When there is no ambiguity, we will abbreviate $k$-arc-connected by saying \emph{$k$-connected}.}. Note that an orientation is strongly connected if and only if it is $1$-connected. See Figure~\ref{fig:orientations} for examples.

 \begin{figure}[h]
 \begin{center}
 
 \includegraphics[width=.9\textwidth]{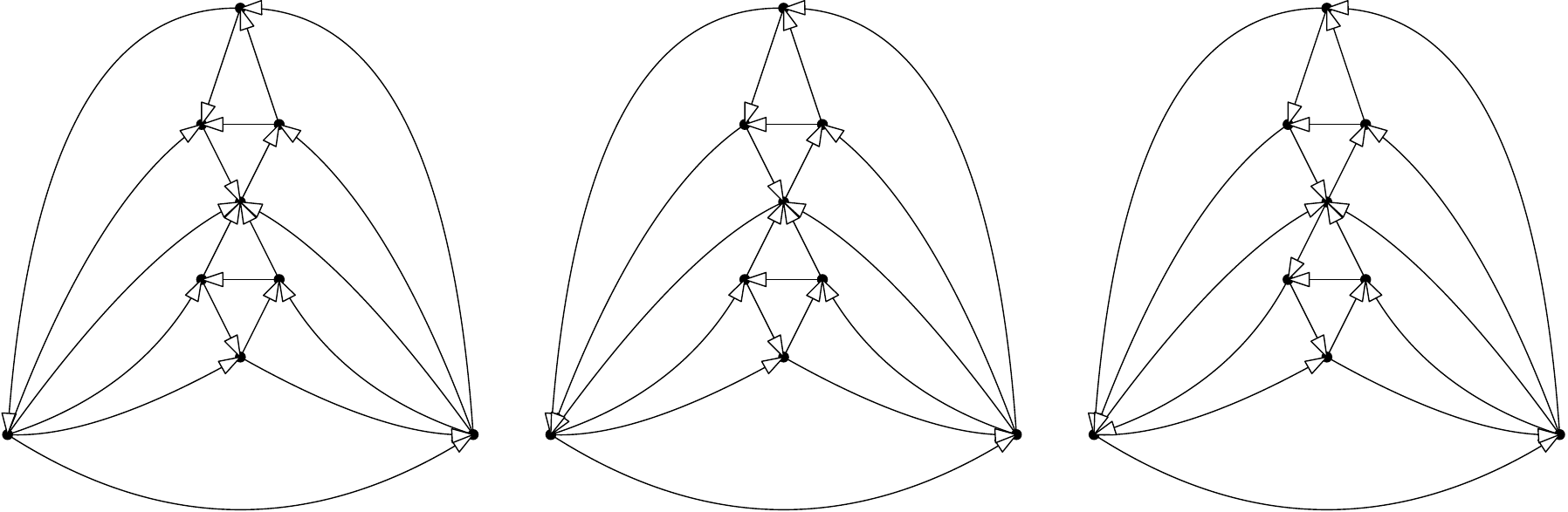}
 \caption{Three strongly connected orientations. Only those in the middle and on the right side are 2-arc-connected.} \label{fig:orientations}
\end{center}
 \end{figure}

The concept of $k$-connectivity is a classic in the theory of directed graphs.
Let us review some of the fundamental results of the area. A theorem of Nash-Williams~\cite{nash-williams_1960} asserts that $G$ admits a $k$-connected orientation if and only if $G$ is $2k$-edge connected. A result of Lovász~\cite{Lovasz:1979} yields an easy algorithm to find a $k$-connected orientation of $G$ if there is one, that runs in $O(n^6)$. More involved techniques have been developed to improve this runtime, see~\cite{Gabow94,LauY13}. Important contributions to the theory of $k$-connected orientations come from Frank. In particular, he showed that $k$-connected orientations are an instance of the theory of submodular flows, see~\cite{Frank82}. Finding a submodular flow (and hence a $k$-connected orientation) can be done in polynomial time. There is a considerable amount of literature proposing different algorithmic solutions for this task on simple graphs, multigraphs, and mixed graphs, see e.g.~\cite{Frank82,Gabow93,Gabow95,IwataK10}. The existence of these polynomial algorithms is the central fact behind a first enumeration algorithm that we propose here.
Another fundamental result of Frank is that any two $k$-connected orientations of $G$ can be transformed into each other by reversing directed paths and directed cycles~\cite{DBLP:journals/dm/Frank82}. Disassembling this result lies at the core of our second algorithm. 

%Indeed, combining this with another result of Gabow~\cite{Gabow93}, one can decide if a the partial orientation of a given mixed graph $G=(V,E\cup A)$, where some edges are oriented and other are not, can be extended to one that is $k$-connected in time $O(kn^2(\sqrt{kn}+k^2 \log(\frac{n}{k})))$~\petru{Well, this should work only for simple mixed graphs, otherwise we would get a better runtime than $O(n^5)$ for obtaining a $k$-connected orientation}. 

All our algorithms are \textit{backtrack searches}, i.e., we build a tree whose vertices are partial solutions and the leaves are full solutions. We explore the tree on the fly by a depth-first search in order to reach all the leaves. The delay of the algorithm depends on the depth of the tree and the time spent at each node.

The first algorithm can be described as follows.
Using a min-cost $k$-connected orientation algorithm due to~\cite{IwataK10} and following ideas of~\cite{Bang-Jensen2018} we design an algorithm that decides in $O(n^3k^3+kn^2m)$ if a given mixed graph $G=(V,E\cup A)$ can be extended to an orientation that is $k$-connected.  This yields an enumeration algorithm for $k$-connected orientations with $O(m(n^3k^3+kn^2m))$ time delay.
The idea is to simply start with $G=(V,E)$ as the root vertex of the backtrack search tree and at each node create two children by picking an edge, fixing it to be oriented in one way or the other, and verifying if a $k$-connected extension exists. Since this tree has depth in $O(m)$ the previously stated runtime follows. Note that any other algorithm finding a $k$-connected extension of a partial orientation could be used in this approach, see for example~\cite{Frank82, Gabow93, Gabow95}. See \Cref{sec:submodular} for a detailed description.

The main part of the paper presents an alternative enumeration algorithm. Its main advantages are its simplicity and an improvement of the runtime. Moreover, it consists of two parts enumerating objects of independent interest. The idea is based on Frank's result~\cite{DBLP:journals/dm/Frank82} that any two $k$-connected orientations can be transformed into each other by reversals of directed cycles and directed paths. Our algorithm splits this into two parts: one reversing cycles, the other paths. 

By reversing directed cycles, we obtain an algorithm that enumerates all orientations of $G$ with a prescribed outdegree sequence also known as \emph{$\alpha$-orientations}. See for instance Figure~\ref{fig:orientations}, where the orientations on the middle and right have th same outdegrees and can be transformed into one another by revering a directed triangle. These orientations are of current interest with respect to computational properties (see e.g.~\cite{ACHKMSV19}) and model many combinatorial objects such as domino and lozenge tilings of a plane region \cite{Rem04,Thu90}, spanning trees of a planar graph~\cite{GL86}, perfect matchings (and $d$-factors) of a bipartite graph \cite{LZ03,P02,F04}, Schnyder woods of a planar triangulation \cite{Bre00}, Eulerian orientations of a graph \cite{F04}, and contact representations of planar graphs with homothetic triangles, rectangles, and $d$-gons \cite{Fel13,GLP12,FSS18b}. The set of $\alpha$-orientations of a planar graph can be endowed with a natural distributive lattice structure~\cite{F04} and therefore, in this case the enumeration can be done in linear amortized time~\cite{Habib01}. It is a famous question whether the enumeration of the elements of a distributive lattice can be done in constant amortized time~\cite{Pruesse1993}. This is open even when restricted to distributive lattices coming from the $\alpha$-orientations of a planar graph.
Our enumeration algorithm for general $\alpha$-orientations runs in $O(m^2)$ time delay  and is explained in~\Cref{sec:alpha}. 

Reversing directed paths, yields an enumeration algorithm for all outdegree sequences that are attained by $k$-connected orientations of $G$. This algorithm runs in $O(m^2kn)$ time delay and is explained in~\Cref{sec:outdegreesequences}.

In~\Cref{sec:together} we combine these algorithms, leading to an enumeration algorithm for $k$-connected orientations with time delay $O(m^2kn)$. The split into two algorithms leads to a structured traversal of the solution space which allows to prove an amortized runtime in $O(m^2)$. 

We close the paper in~\Cref{sec:discuss} with some open questions.

\section{Preliminaries}
In this section we introduce some digraph basics. All graphs we consider in this paper are loopless multigraphs and consequently their orientations also may have multiple parallel or anti-parallel arcs. We will also consider \emph{mixed multigraphs}. These are of the form  $G=(V,E\cup A)$, where $E$ is a multiset of undirected edges and $A$ is a multiset of directed arcs. Analogously to undirected graphs, an \emph{orientation} of a mixed graph consists in fixing a direction for each of its undirected edges.  

The digraph obtained from $D=(V,A)$ by reversing a set of arcs $B\subseteq A$ is denoted by $D^{B}$. If $A=B$ we write $D^-$ instead of $D^B$. Given an arc $a=(u,v)$, we denote by $a^-=(v,u)$ the reversed arc. Similarly, the reversed arc set of $B$ is denoted by $B^-=\{a^-\mid a\in B\}$.
Given a digraph $D=(V,A)$ and a subset $X\subseteq V$ we will denote its \emph{outdegree} $\delta_D^+(X)=|\{a=(u,v)\in A\mid u\in X\not\ni v\}|$. In the case where $X=\{v\}$ consists of a single element we just write $\delta_D^+(v)$.  

Note that by the definition of $k$-connectivity we have that $D$ is $k$-connected if and only if $\delta_D^+(X)\geq k$ for every $\emptyset\neq X\subsetneq V$. There is a directed version of Menger's Theorem that provides another characterization of $k$-connectivity, see~\cite{Menger}. For its statement, we denote by $\lambda(u,v)$ the maximum number of pairwise arc-disjoint directed paths from $u$ to $v$. 

\begin{theorem}[Local Menger Theorem for digraphs]\label{thm:Menger}
Let $D=(V,A)$ be a digraph and $u,v\in V$. We have $\lambda(u,v)=\min\{\delta^+_D(X)\mid u\in X\not\ni v\}$.
\end{theorem}

Thus, as a consequence of~\Cref{thm:Menger}, $D$ is $k$-connected if and only $\lambda(u,v)\geq k$ for all $u,v\in V$.

\section{A first enumeration algorithm}\label{sec:submodular}
We present a more detailed description of the first enumeration algorithm mentioned in the introduction. In~\cite{Bang-Jensen2018} the following result is shown based on a min-cost $k$-connected orientation algorithm due to~\cite{IwataK10}:
\begin{theorem}[\cite{IwataK10, Bang-Jensen2018}]\label{thm:feasibility}
It can be decided in time $O(k^3n^3+kn^2m)$, whether a mixed graph admits a $k$-connected orientation.
\end{theorem}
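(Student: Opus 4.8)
The plan is to reduce the feasibility question for mixed graphs, following~\cite{Bang-Jensen2018}, to a \emph{minimum-cost} $k$-connected orientation problem on an undirected multigraph, and then to invoke the algorithm of Iwata and Kobayashi~\cite{IwataK10} for the latter. Given a mixed multigraph $G=(V,E\cup A)$, I would build the undirected multigraph $H=(V,E\uplus\bar{A})$, where $\bar{A}$ contains one undirected copy of $\{u,v\}$ for each arc $(u,v)\in A$, and attach a cost to each of the two orientations of every edge of $H$: an edge coming from $E$ costs $0$ in either direction, while the edge coming from an arc $a=(u,v)\in A$ costs $0$ if oriented from $u$ to $v$ and $1$ if oriented from $v$ to $u$. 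All costs are then nonnegative integers, $H$ has $n$ vertices, and $H$ has $m$ edges.

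Next I would verify the reduction. Orientations of $G$ that extend $A$ are in bijection with orientations of $H$ of total cost $0$, and this bijection preserves the underlying arc multiset on $V$; since $k$-connectivity is the cut condition $\delta^+_D(X)\ge k$ for all $\emptyset\neq X\subsetneq V$, which depends only on that arc multiset, $G$ admits a $k$-connected orientation extending $A$ if and only if $H$ admits a $k$-connected orientation of cost $0$. As costs are nonnegative integers, this happens if and only if the minimum cost of a $k$-connected orientation of $H$ equals $0$; if $H$ has no $k$-connected orientation at all (by Nash-Williams, exactly when $H$ is not $2k$-edge-connected), the minimum is $+\infty$ and the answer is ``no''. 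I would then invoke~\cite{IwataK10}: a minimum-cost $k$-connected orientation of an undirected multigraph with nonnegative integer arc costs --- or a proof that none exists --- can be computed in $O(k^3n^3+kn^2m)$ time, and comparing this minimum with $0$ decides the original question within the same bound. The reason this is tractable at all is Frank's observation~\cite{Frank82} that $k$-connected orientations form an instance of submodular flows; the stated bound comes from the refined submodular-flow machinery of~\cite{IwataK10}.

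The main obstacle is making the invocation of~\cite{IwataK10} precise rather than merely conceptual: one has to confirm that their running-time bound covers multigraphs (parallel edges in $E$, and parallel or antiparallel arcs in $A$, all of which become parallel edges of $H$) and arbitrary nonnegative integer costs --- only $\{0,1\}$ costs are actually used here, a benign special case --- and that the edge count of $H$ is correctly charged as $O(m)$. Antiparallel arcs $(u,v),(v,u)\in A$ warrant a line of care: they yield two parallel edges of $H$ with opposite cost patterns, a cost-$0$ orientation is forced to orient them oppositely, and no inconsistency arises because the cut condition does not distinguish parallel copies. Once these bookkeeping points are settled, the reduction together with the cited algorithm yields the claimed $O(k^3n^3+kn^2m)$ time bound.
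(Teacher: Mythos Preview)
Your proposal is correct and matches the approach the paper itself indicates: the paper does not give its own proof of \Cref{thm:feasibility} but cites it from~\cite{Bang-Jensen2018}, explicitly noting that the result is obtained ``based on a min-cost $k$-connected orientation algorithm due to~\cite{IwataK10}'', which is precisely the reduction you carry out. Your write-up in fact supplies more detail than the paper does, and the bookkeeping points you flag (multigraphs, $\{0,1\}$ costs, antiparallel arcs) are the right ones to check.
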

% 
% The idea is just unorienting the whole graph $G$ and finding a min-cost $k$-connected orientation of it, where the cost on $A^-$ is set to be very high. More precisely, assign a  

By Theorem~\ref{thm:feasibility}, one can easily design a backtrack search algorithm for enumerating all $k$-connected orientations with polynomial delay (see Algorithm~\ref{algo:submod}). At each step it takes a new edge and checks for both its orientations if there exists a $k$-connected orientation of the graph respecting the so far fixed orientation.

\bigskip

%\RestyleAlgo{boxruled}
\begin{algorithm}[H]\label{algo:submod}
\caption{enumeration of $k$-connected orientations via min-cost orientations}
\KwIn{A graph $G = (V,E)$ and an integer $k\in\mathbb{N}$}
\KwOut{The $k$-connected orientations of $G$}
\BlankLine
\SetAlgoLined
\SetKwFunction{FEnumerate}{Enumerate}
\SetKwProg{Fn}{Function}{:}{end}
  \Begin{  \SetAlgoVlined
    Fix any linear ordering on $E$\;
    \FEnumerate{$G=(V,E,\emptyset)$}\;
  }
\bigskip

\Fn{\FEnumerate{$G=(V,E,F)$}}{
    \SetAlgoVlined
    \eIf{$E\neq \emptyset$}{
        Take the smallest $e = \{u,v\} \in E$\;
        \If{$G' := (V, E \setminus \{e\}, F \cup \{(u,v)\})$ admits a $k$-connected orientation}
            {\FEnumerate{$G'$}\;}
        \If{$G' := (V, E \setminus \{e\}, F \cup \{(v,u)\})$  admits a $k$-connected orientation}
            {\FEnumerate{$G'$}\;}
    }
    {Output $G$\;}
}
\end{algorithm}
\medskip

This algorithm takes as input an undirected graph $G=(V,E)$. Clearly, all $k$-connected orientations are produced and since each node built by this algorithm gives rise to disjoint branches it does not repeat the same solution twice.
The depth of the binary execution tree is $m$ and at each node we check the orientability of a mixed graph which is solvable in $O(k^3n^3+kn^2m)$ by~\Cref{thm:feasibility}. We conclude 

\begin{proposition}
Let $G$ be a graph and $k\in\mathbb{N}$. Algorithm~\ref{algo:submod} enumerates all $k$-connected orientations of $G$ with delay and amortized time in $O(m(k^3n^3+kn^2m))$.
\end{proposition}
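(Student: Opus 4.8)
The plan is to verify the three claims in the proposition — correctness (every $k$-connected orientation is output), non-repetition (none is output twice), and the time bound — by exploiting the structure of the backtrack search tree built by Algorithm~\ref{algo:submod}. First I would set up the vocabulary: a node of the execution tree is a mixed graph $G=(V,E,F)$ where $F$ is the set of already-fixed arcs and $E$ is the set of edges not yet oriented, and a leaf is reached exactly when $E=\emptyset$, at which point $F$ is a full orientation of the original graph. Observe that by the $\mathtt{if}$-guards, a node is only created when the corresponding mixed graph admits a $k$-connected orientation; in particular every leaf that is reached is itself a $k$-connected orientation, which already gives one direction of correctness: \emph{everything output is a valid solution}.

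For the converse, I would argue that every $k$-connected orientation $D$ of $G$ is reached as a leaf. Induct on $|E|$ along the root-to-leaf path: if $D$ orients the smallest edge $e=\{u,v\}$ as $(u,v)$, then $D$ is a $k$-connected orientation extending $F\cup\{(u,v)\}$, so the mixed graph $(V,E\setminus\{e\},F\cup\{(u,v)\})$ admits a $k$-connected orientation, hence the first $\mathtt{if}$-guard succeeds and the recursion descends into that child; symmetrically if $D$ orients $e$ as $(v,u)$. Iterating, $D$ is reached at the unique leaf whose fixed-arc set equals $D$. Non-repetition is immediate from the tree structure: the two children of any internal node fix $e$ in opposite directions, so their subtrees contain disjoint sets of full orientations, and distinct leaves carry distinct arc sets; hence each solution appears at exactly one leaf and is output once.

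For the running time, the key structural facts are that the execution tree is binary, that its depth is exactly $m$ (each recursive call removes one edge from $E$), and that the work done at each node is dominated by the (at most two) feasibility tests, each costing $O(k^3n^3+kn^2m)$ by \Cref{thm:feasibility}; bookkeeping such as selecting the smallest edge and updating $E,F$ is $O(m)$ and absorbed. The delay is the maximum time between two consecutive outputs, i.e. the time to walk from one leaf back up and down to the next leaf; since the tree has depth $m$ this traversal touches $O(m)$ nodes, giving delay $O(m(k^3n^3+kn^2m))$, and since delay upper-bounds amortized time the same bound holds for the amortized time. (One should also check the degenerate case $E=\emptyset$ at the root, i.e. $G$ already oriented, where the single solution is output after one feasibility-free step — this is fine.)

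The only real subtlety — the ``main obstacle,'' though it is mild — is making the delay argument airtight: one must confirm that the backtrack search does not get stuck exploring a dead subtree between outputs. This is precisely why the $\mathtt{if}$-guards test \emph{feasibility before recursing}: every node entered is guaranteed to lie on a root-to-leaf path all of whose remaining branching choices can be completed to a $k$-connected orientation, so from any node the algorithm reaches a leaf within $m$ further steps without backtracking past the node, and between two consecutive leaves it climbs up and descends along paths of total length $O(m)$. With this observation in place the bound follows directly.
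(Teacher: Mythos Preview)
Your proposal is correct and follows essentially the same approach as the paper, which argues in a single terse paragraph that the branches are disjoint (non-repetition), that all $k$-connected orientations are produced, and that the delay is bounded by the tree depth $m$ times the per-node feasibility cost from \Cref{thm:feasibility}. Your version simply spells out each of these points in more detail, including the observation that the $\mathtt{if}$-guards guarantee no dead subtrees, which the paper leaves implicit.
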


A downside of Algorithm~\ref{algo:submod} is that~\Cref{thm:feasibility} is based on a submodular flow algorithm and its implementation is intricate. Moreover, the literature on submodular flows and the available runtime analyses in different settings such as simple and multigraphs are hard to extract. A more detailed discussion of these approaches is discussed in~\cite{BlindThesis}.

In the following we will give two simple algorithms of independent interest that combined yield an alternative solution for the enumeration of $k$-connected orientations. The resulting algorithm improves over the delay and amortized runtime of Algorithm~\ref{algo:submod}. 

\section{Orientations with prescribed outdegree sequence}\label{sec:alpha}

Let $G=(V,E)$ be a graph and $\alpha:V\to \mathbb{N}$. We say that an orientation $D$ of $G$ is an \emph{$\alpha$-orientation} if $\delta_{D}^{+}(v) = \alpha(v)$ for all $v\in V$. We will denote by $\mathcal{O}_{\alpha}(G)$ the set of all $\alpha$-orientations of $G$. In the present section we discuss the enumeration of the set $\mathcal{O}_{\alpha}(G)$.
Before presenting the algorithm let us give a folklore result about $\alpha$-orientations:
\begin{lemma}\label{lem:deg-id}
Let $G$ be a graph and $D$ and $D'$ be two orientations of $G$. We have $\delta_{D}^{+} = \delta_{D'}^{+}$ if and only if orientation $D'$ can be obtained from $D$ by reversing a set of arc-disjoint directed cycles.
\end{lemma}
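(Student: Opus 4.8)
The plan is to prove both directions of the equivalence.

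\medskip

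\noindent\textbf{The easy direction ($\Leftarrow$).}
First I would observe that reversing a single directed cycle $C$ in a digraph $D$ changes no outdegree: every vertex $v$ on $C$ has exactly one arc of $C$ entering and one leaving, so reversing $C$ trades one out-arc for one in-arc and vice versa, leaving $\delta^+(v)$ unchanged; vertices off $C$ are untouched. Since reversing a set of arc-disjoint directed cycles is the same as reversing each one in turn (they share no arcs, so the operations commute and are well defined), the outdegree sequence is preserved throughout. Hence if $D'$ is obtained from $D$ this way, $\delta_D^+ = \delta_{D'}^+$.

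\medskip

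\noindent\textbf{The main direction ($\Rightarrow$).}
Now suppose $\delta_D^+ = \delta_{D'}^+$. Let $B \subseteq A(D)$ be the set of arcs that are oriented oppositely in $D$ and $D'$, so that $D' = D^{B}$. The goal is to decompose (the appropriately oriented copy of) $B$ into arc-disjoint directed cycles of $D$. Consider the arc set $B$ as it sits in $D$, i.e. with the orientations it has in $D$. The key claim is that this subdigraph $H = (V, B)$ is \emph{balanced}: every vertex has equal in- and out-degree within $H$. To see this, fix $v$ and split its incident arcs in $D$ into those that agree with $D'$ and those in $B$. Reversing exactly the arcs of $B$ at $v$ turns $\delta_D^+(v)$ into $\delta_{D'}^+(v)$; writing $p$ for the number of out-arcs of $v$ in $D$ that lie in $B$ and $q$ for the number of in-arcs of $v$ in $D$ that lie in $B$, reversal changes the outdegree by $q - p$, so $\delta_{D'}^+(v) - \delta_D^+(v) = q - p = 0$, i.e. $p = q$. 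Thus $H$ is a digraph in which in-degree equals out-degree at every vertex. A standard fact (repeatedly peel off a directed cycle: follow out-arcs until a vertex repeats, which must happen in a finite balanced digraph with at least one arc, remove that cycle, and note the remainder is still balanced) shows $H$ decomposes into arc-disjoint directed cycles $C_1, \dots, C_t$. Reversing $C_1, \dots, C_t$ in $D$ reverses exactly the arcs of $B$, yielding $D'$.

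\medskip

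\noindent\textbf{Main obstacle.}
The only real content is the decomposition of the balanced digraph $H$ into directed cycles and, slightly more delicately, making sure the cycles found in $H$ (which carry $D$-orientations) are genuinely \emph{directed} cycles of $D$ — this is immediate since $H$ is a subdigraph of $D$. One point to handle carefully is the multigraph setting: $B$ is a multiset of arcs and $H$ may have parallel arcs, but the cycle-peeling argument is unaffected, since a pair of parallel arcs already forms a (length-two) directed cycle. I do not expect the write-up to require more than a short paragraph for each direction.
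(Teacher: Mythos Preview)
Your proof is correct and follows essentially the same approach as the paper: both show that the subdigraph of $D$ on the arcs where $D$ and $D'$ differ is balanced (Eulerian) and hence decomposes into arc-disjoint directed cycles. One minor slip in your final aside: a pair of \emph{parallel} arcs (both $u\to v$) is not a directed cycle---you presumably mean anti-parallel arcs---but in any case the cycle-peeling argument for balanced multidigraphs needs no special treatment of multiplicities.
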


\begin{proof}
Reversing the direction of a directed cycle does not change the outdegree function. Therefore, if $D'$ is obtained from $D$ by reversing a set of arc-disjoint directed cycles, then $\delta_{D}^{+} = \delta_{D'}^{+}$.

Let conversely $D=(V,A)$ and $D'=(V,A')$ be orientations of $G$ such that $\delta_{D}^{+} = \delta_{D'}^{+}$.
Consider the arcs $A\setminus A'$ of $D$ whose direction differ in $D'$. Observe that in the directed subgraph $D\setminus A'$ formed by these arcs, the indegree and the outdegree coincide at each node. Thus, $D\setminus A'$ is Eulerian and can be decomposed into an arc-disjoint union of directed cycles. This concludes the proof.
\qed
\end{proof}

Our enumeration algorithm for $\mathcal{O}_{\alpha}(G)$ takes a digraph $D=(V,A)\in\mathcal{O}_{\alpha}(G)$ and a set of \emph{fixed} arcs $F\subseteq A$ as input. Initially $D$ is an arbitrary $\alpha$-orientation and $F=\emptyset$. The algorithm recursively constructs all orientations of $\mathcal{O}_{\alpha}(G)$ such that at each call all possible $\alpha$-orientations  contain the current $F$.
At each recursive call, an arc $a\notin F$ will be fixed with the two possible cases to consider: The branch with $a=(u,v)$ is known to be non-empty since $D$ is a possible extension, thus we recurse with $D$ and $F\cup a$.
For the branch with $a^-=(v,u)$ we check if there is a directed path $P$ in $D\setminus F$ from $v$ to $u$. If this is the case, then the directed cycle $(P,a)$ is reversed and we recurse with the new $\alpha$-orientation $D^{P\cup\{a\}}$ and $F\cup \{a^-\}$.

The set of fixed arcs is extended along an arbitrary linear ordering of the arcs. Whenever all arcs are fixed we output the current orientation.
We give a presentation of our algorithm in pseudo-code (see Algorithm~\ref{algo:binary-tree}). 

\medskip
\begin{algorithm}[H]\label{algo:binary-tree}
\caption{Backtrack search for \emph{$\alpha$-orientations}}
\KwIn{A graph $G = (V,E)$ and $\alpha: V\to \mathbb{N}$}
\KwOut{All elements of $\mathcal{O}_{\alpha}(G)$}
\BlankLine

\SetAlgoLined
\SetKwFunction{FEnOPODS}{EnOPODS}
\SetKwProg{Fn}{Function}{:}{end}

\Begin{  \SetAlgoVlined
   \If{there exists $D=(V,A)\in \mathcal{O}_{\alpha}(G)$}{
   Fix an arbitrary linear order on $A$\;
   \FEnOPODS{$D$, $\emptyset$}\;
   }
   }

\bigskip

\SetKwFunction{FEnOPODS}{EnOPODS}
\SetKwProg{Fn}{Function}{:}{end}
\Fn{\FEnOPODS{$D$, $F$}}{
\SetAlgoVlined
\eIf{$F\neq A$}{
Take the smallest $a = (u,v)\in A\setminus F$\;
\FEnOPODS{$D$, $F\cup\{a\}$}\;
\If{$D\setminus F$ has a directed path $P$ from $v$ to $u$}{\FEnOPODS{$D^{P\cup\{a\}}, F\cup\{a^-\}$}\;}}
{Output $D$\;}
}
\end{algorithm}
\medskip

%\RestyleAlgo{boxruled}
%\begin{algorithm}[H]\label{algo:binary-tree}
%\caption{Enumerate($D$, $F$)}
%\KwIn{A directed graph $D = (V,A)$ with an order on $A$ and $F\subseteq A$}
%\KwOut{All reorientations $D' = (V, A')$ of $D$ with $\delta^+_{D'}\equiv \delta^+_{D}$ and $F\subseteq A'$.}
%\If{$F\neq A$}{
%Take the smallest $a = (u,v)\in A\setminus F$ \\
%Enumerate($D$, $F\cup\{a\}$)\\
%\If{$D\setminus F$ has a directed path $P$ from $v$ to $u$}{Enumerate($D^{a \cup P}, F\cup\{a^-\}$)}}
%\Else{Output $D$}
%}
%\end{algorithm}
%\medskip

Algorithm~\ref{algo:binary-tree} takes an undirected graph $G = (V,E)$ as input, but after finding one initial $\alpha$-orientation $D$, it just consists of recursive calls of the function \FEnOPODS{$D$, $F$}. 

\begin{lemma}\label{thm:alpha}
Let $D\in\mathcal{O}_{\alpha}(G)$ and  $F\subseteq A$. The function \FEnOPODS{$D$, $F$} generates each orientation in $\mathcal{O}_{\alpha}(G)$ that coincides with $D$ on $F$ exactly once and runs with $O(m^2)$ time delay.
\end{lemma}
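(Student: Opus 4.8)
The plan is to prove correctness (each desired orientation is produced exactly once) and the delay bound separately, as the lemma naturally splits into these two parts.

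\textbf{Correctness.} First I would argue by induction on $|A \setminus F|$ that \FEnOPODS{$D$, $F$} outputs exactly the set $S(D,F) := \{D' \in \mathcal{O}_\alpha(G) : D' \text{ agrees with } D \text{ on } F\}$, each exactly once. The base case $F = A$ is immediate: $S(D,A) = \{D\}$ and the algorithm outputs $D$. For the inductive step, let $a = (u,v)$ be the smallest arc in $A \setminus F$. Any $D' \in S(D,F)$ either contains $a$ or contains $a^- = (v,u)$, and this partitions $S(D,F)$ into $S(D, F \cup \{a\})$ and the set $S'$ of those $D'$ agreeing with $D$ on $F$ but having $a$ reversed. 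The first recursive call handles $S(D, F\cup\{a\})$ by induction (note $D$ itself lies in this set, so the call is legitimate). The key structural claim is that $S' \neq \emptyset$ if and only if $D \setminus F$ has a directed $v$-$u$ path $P$, and in that case $S' = S(D^{P \cup \{a\}}, F \cup \{a^-\})$. The forward direction uses \Cref{lem:deg-id}: if $D' \in S'$, then $D'$ and $D$ differ on a set of arc-disjoint directed cycles of $D$, and since they differ on $a$, one such cycle $C$ contains $a$; as $D'$ agrees with $D$ on $F$, this cycle avoids $F$, so $C \setminus \{a\}$ is a directed $v$-$u$ path in $D \setminus F$. Conversely, if such $P$ exists then $D^{P\cup\{a\}} \in S'$. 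Finally, since after reversal $D^{P\cup\{a\}}$ still agrees with the new $D$ on $F \cup \{a^-\}$ (the cycle $P \cup \{a\}$ is disjoint from $F$), the second recursive call outputs $S(D^{P\cup\{a\}}, F\cup\{a^-\}) = S'$ by induction. Disjointness of the two branches (one has $a$, the other $a^-$) guarantees no repetition. This establishes the "exactly once" part.

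\textbf{Delay.} For the runtime, the recursion tree has depth at most $m$ (each call fixes one more arc), and each node does $O(m)$ work beyond the path search — namely finding a directed $v$-$u$ path in $D \setminus F$ and performing the reversal, both $O(n+m) = O(m)$ via BFS/DFS. The delay between two consecutive outputs is the time to traverse from one leaf, back up some number of levels, and down to the next leaf. A crucial observation making the bound $O(m^2)$ rather than something worse is that the recursion tree has no "dead" internal nodes with only the output happening far below: every internal node has at least one nonempty child (the $a = (u,v)$ branch always recurses since $D$ witnesses nonemptiness), so the tree is such that between consecutive leaves we traverse $O(m)$ edges of the tree (going up at most $m$ and down at most $m$), each costing $O(m)$, giving $O(m^2)$ total delay. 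I would also note the time before the first output (descending from the initial call) and after the last output are each $O(m^2)$, covered by the same argument.

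\textbf{Main obstacle.} The delicate point — and the one I would spend the most care on — is the structural claim that $S' = S(D^{P\cup\{a\}}, F \cup \{a^-\})$, in particular that the specific cycle reversal $D^{P \cup \{a\}}$ reaches \emph{every} element of $S'$ and not just some of them, combined with the fact that the choice of path $P$ does not matter for \emph{which set} gets enumerated (different paths $P, P'$ yield different representatives $D^{P\cup\{a\}}, D^{P'\cup\{a\}}$, but both lie in $S'$ and both have $S(\cdot, F\cup\{a^-\}) = S'$, since $S(D'', F\cup\{a^-\})$ depends only on the restriction of $D''$ to $F \cup \{a^-\}$ which is the same for both). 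Getting this invariant stated cleanly is what makes the induction go through. The other mild subtlety is being careful that "$D$ agrees with $D^{P\cup\{a\}}$ on $F$" — which needs that $P$, living in $D \setminus F$, together with $a \notin F$, forms a cycle disjoint from $F$, so reversing it changes nothing on $F$; this is what keeps the recursive precondition (the first argument is an $\alpha$-orientation agreeing with the required arcs) satisfied.
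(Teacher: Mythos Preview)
Your proposal is correct and follows essentially the same approach as the paper: induction on $|A\setminus F|$ for correctness, the same use of \Cref{lem:deg-id} to show the $a^-$-branch is nonempty iff a $v$-to-$u$ path exists in $D\setminus F$, and the same $O(m)$-depth, $O(m)$-per-node delay analysis. Your treatment is in fact slightly more explicit than the paper's on two points --- the irrelevance of which path $P$ is chosen, and the absence of dead internal nodes in the recursion tree --- but these are refinements of the same argument, not a different route.
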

\begin{proof}
We prove that each of the claimed $\alpha$-orientations is generated exactly once by induction on $|A\setminus F|$. If $|A \setminus F| = 0$, then \FEnOPODS{$D$, $F$} = \FEnOPODS{$D$, $A$} $= \{D\}$ and we are done. Let now $|A \setminus F| > 0$ and $a = (u,v) \in A \setminus F$. By induction hypothesis \FEnOPODS($D,F\cup \{a\}$) generates each $\alpha$-orientation that coincides with $D$ on $F\cup \{a\}$ exactly once and \FEnOPODS{$D^{P\cup\{a\}}, F\cup\{a^-\}$} generates each $\alpha$-orientation that coincides with $D^{P\cup\{a\}}$ on $F\cup\{a^-\}$  exactly once. Clearly, both sets are disjoint since they differ with respect to the orientation of $a$. Thus, no repetitions are produced. Since $(P,a)$ is a directed cycle by~\Cref{lem:deg-id} we have $D^{P\cup\{a\}}\in\mathcal{O}_{\alpha}(G)$. Since $P\cap F=\emptyset$ we have that $D^{P\cup\{a\}}$ coincides with $D$ on $F$.

To see that we do not miss any orientation, we prove that if there is no directed path $P$ from $u$ to $v$ in $D\setminus F$, then there exists no $\alpha$-orientation fixing $F$ and reversing $a$. By contraposition, suppose that $D'=(V,A')$ is an $\alpha$-orientation that coincides with $D$ on $F$ but differs on $a$. Then by~\Cref{lem:deg-id}, there is a set of arc-disjoint directed cycles in $D'$ whose union is $A'\setminus A$. Since both digraphs coincide on $F$, these cycles are disjoint from $F$. Since both digraphs differ on $a$, one of the directed cycles $C$ contains $a^{-}$ in $D'$. Thus, the path $P=(C\setminus\{a^-\})^-$ is a directed path in $D\setminus F$ from $v$ to $u$.

For the complexity, note that in each recursion step the algorithm checks the presence of a directed path, which can be done by a single BFS from the source vertex $u$ towards the target $v$. In general the complexity of a BFS algorithm is $O(m+n)$, however in our case the BFS tree will be constructed only on the strongly connected component of $D$ containing $u$ and thus the complexity is in $O(m)$. The depth of our recursion tree is bounded by $m$. Thus, the total time delay is bounded by $O(m^2)$.
\qed
\end{proof}

Observe that Algorithm~\ref{algo:binary-tree} has to use a separate method for finding a first element $D\in\mathcal{O}_{\alpha}(G)$. It is well-known that this problem can be reduced to a flow-problem, see e.g.~\cite{F04}. Therefore, this preprocessing step can be done in $O(mn)$ time, see~\cite{Orlin13}. With~\Cref{thm:alpha} we obtain:

\begin{theorem}\label{thm:enumerateOalpha}
Let $G$ be a graph and $\alpha: V\to \mathbb{N}$. Algorithm~\ref{algo:binary-tree} enumerates $O_{\alpha}(G)$ with time delay in $O(m^2)$.
\end{theorem}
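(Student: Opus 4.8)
The plan is to simply assemble the two ingredients already available: the correctness-and-delay statement for the recursive procedure \FEnOPODS, namely \Cref{thm:alpha}, and the cost of the preprocessing step that produces a starting $\alpha$-orientation. First I would invoke \Cref{thm:alpha} with $F = \emptyset$: the top-level call \FEnOPODS{$D$, $\emptyset$} generates every orientation in $\mathcal{O}_{\alpha}(G)$ that coincides with $D$ on $\emptyset$ — that is, all of $\mathcal{O}_{\alpha}(G)$ — each exactly once, and it does so with time delay $O(m^2)$. So the enumeration phase is already settled.

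The only thing not covered by \Cref{thm:alpha} is the \textbf{preprocessing}: Algorithm~\ref{algo:binary-tree} first has to decide whether $\mathcal{O}_{\alpha}(G) \neq \emptyset$ and, if so, produce one witness $D$. Here I would recall the folklore reduction of this feasibility question to a max-flow computation (see~\cite{F04}): build a bipartite flow network with a source, a sink, one node per vertex of $G$ and one node per edge of $G$, with arc capacities forcing each edge to pick exactly one endpoint as its out-vertex and each vertex $v$ to receive exactly $\alpha(v)$ units; an integral flow saturating all arcs corresponds to an $\alpha$-orientation, and none exists otherwise. This network has $O(n+m)$ nodes and $O(m)$ arcs, so by Orlin's algorithm~\cite{Orlin13} it is solved in $O(mn)$ time, which produces $D$ (or certifies infeasibility). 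In the infeasible case the algorithm outputs nothing, which is correct.

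Finally I would combine the two bounds. The preprocessing costs $O(mn)$, a one-time additive term; since $n \le m+1$ whenever $G$ has no isolated vertices (and isolated vertices are irrelevant to orientations), this is $O(m^2)$, so it is absorbed into the delay bound and in particular does not worsen the asymptotic delay. After preprocessing, every gap between consecutive outputs — including the gap before the first output and after the last — is bounded by $O(m^2)$ by \Cref{thm:alpha}. Hence Algorithm~\ref{algo:binary-tree} enumerates $\mathcal{O}_{\alpha}(G)$ with time delay $O(m^2)$, which is the claim.

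There is no real obstacle here: the theorem is essentially a corollary of \Cref{thm:alpha} together with the standard flow reduction. The only point deserving a sentence of care is the bookkeeping that the initial preprocessing, although not of the form "time between two outputs," still must be counted toward the delay (it precedes the first output), and that $O(mn) \subseteq O(m^2)$ so it does not dominate — the genuine content, namely the per-step path search and the $O(m)$ recursion depth, was already handled in the proof of \Cref{thm:alpha}.
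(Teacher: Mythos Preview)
Your proposal is correct and follows essentially the same approach as the paper: invoke \Cref{thm:alpha} with $F=\emptyset$ for the enumeration phase, reduce the preprocessing to a max-flow computation solvable in $O(mn)$ time via~\cite{F04,Orlin13}, and observe that $O(mn)\subseteq O(m^2)$ so the preprocessing is absorbed into the delay bound. The paper states exactly these ingredients in the paragraph preceding the theorem and gives no further proof.
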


\section{Outdegree sequences}\label{sec:outdegreesequences}
In this section we will present an algorithm to enumerate the possible outdegree sequences among the $k$-connected orientations of a graph $G$. %More precisely, we treat the following problem:

An easy consequence of~\Cref{lem:deg-id} is the following that can also be found in~\cite{DBLP:journals/dm/Frank82}.
\begin{lemma}\label{lem:alphakconnected}
If $D,D'\in\mathcal{O}_{\alpha}(G)$, then $D$ is $k$-connected if and only if $D'$ is $k$-connected.
\end{lemma}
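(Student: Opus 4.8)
The plan is to deduce this directly from \Cref{lem:deg-id} together with the outdegree characterization of $k$-connectivity noted in the preliminaries. Recall that a digraph $D=(V,A)$ is $k$-connected if and only if $\delta_D^+(X)\geq k$ for every $\emptyset\neq X\subsetneq V$. So the whole statement reduces to the observation that the quantities $\delta_D^+(X)$ are the same for $D$ and $D'$ whenever $D,D'\in\mathcal{O}_{\alpha}(G)$, i.e.\ whenever they have the same outdegree function at every vertex.

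First I would invoke \Cref{lem:deg-id}: since $D,D'\in\mathcal{O}_{\alpha}(G)$ we have $\delta_D^+=\delta_{D'}^+$, so $D'$ is obtained from $D$ by reversing a family $\mathcal{C}$ of arc-disjoint directed cycles. Then I would argue that reversing a directed cycle $C$ does not change $\delta_D^+(X)$ for any $X$: indeed $C$ enters $X$ exactly as often as it leaves $X$ (a closed walk crosses the boundary of $X$ an equal number of times in each direction), so among the arcs of $C$ the number directed out of $X$ equals the number directed into $X$; reversing all of them therefore leaves $\delta^+(X)$ unchanged. Applying this to each cycle of $\mathcal{C}$ in turn (using arc-disjointness only to make ``reverse the family'' well defined, though it is not even needed for this counting argument) gives $\delta_{D'}^+(X)=\delta_D^+(X)$ for every $\emptyset\neq X\subsetneq V$. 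Hence $\delta_D^+(X)\geq k$ for all such $X$ if and only if $\delta_{D'}^+(X)\geq k$ for all such $X$, which by the characterization of $k$-connectivity is exactly the claim. By symmetry the biconditional holds, and one could alternatively phrase the cycle-reversal invariance via \Cref{thm:Menger} by noting $\lambda_D(u,v)=\lambda_{D'}(u,v)$ for all $u,v$.

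There is essentially no obstacle here; the only thing to state carefully is the elementary counting fact that a directed cycle crosses any vertex cut $(X,V\setminus X)$ the same number of times in each direction, so reversing the cycle preserves every outdegree $\delta^+(X)$. This is the ``key step'', but it is a one-line parity observation rather than a genuine difficulty, so I would keep the proof to a couple of sentences.
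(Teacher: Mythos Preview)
Your proposal is correct and follows essentially the same route as the paper: invoke \Cref{lem:deg-id} to write $D'$ as $D$ with a set of arc-disjoint directed cycles reversed, then observe that reversing a directed cycle leaves $\delta^+(X)$ unchanged for every $\emptyset\neq X\subsetneq V$, whence the $k$-connectivity characterization transfers. The paper's proof is just a terser version of exactly this argument.
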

\begin{proof}
Let $D,D'\in\mathcal{O}_{\alpha}(G)$ and $D$ be $k$-connected. Thus, for all $\emptyset\neq X \subsetneq V$ we have $\delta_{D}^{+}(X) \geq k$.
%And let us show that all orientation $D'$ such that $\delta_{D'}^{+} = \alpha$ are also $k$-arc-connected.
By \Cref{lem:deg-id} since $D'\in\mathcal{O}_{\alpha}(G)$ it can be obtained from $D$ by reversing a set of disjoint directed cycles. But reversing a directed cycle in a digraph does not change the outdegree of the subsets of vertices of $G$. Therefore, after reversing the set of directed cycles to obtain $D'$, we have $\delta_{D'}^{+}(X) \geq k$ for all $\emptyset\neq X \subsetneq V$ and $D'$ is $k$-connected.
\qed
\end{proof}

Given $G=(V,E)$, \Cref{lem:alphakconnected} allows to define a function  $\alpha: V\to\mathbb{N}$ to be \emph{$k$-connected} if there is some $k$-connected $D\in\mathcal{O}_{\alpha}(G)$. % or equivalently if all $D\in\mathcal{O}_{\alpha}(G)$ are $k$-connected.  
  In this case we call $\alpha$ a $k$-connected outdegree sequence. Having in mind that we want to enumerate all $k$-connected orientations of $G$ and already are able to enumerate $\mathcal{O}_{\alpha}(G)$ for any given $\alpha$, we are left with enumerating the $k$-connected  outdegree sequences.
% 
% \sproblem{\textsc{$k$-connected outdegree sequences}}{A graph $G=(V,E)$, $k\in \mathbb{N}$}{The $k$-connected functions $\alpha: V\to \mathbb{N}$.}
% 
Here in order to change the outdegree sequence of $D$ we will reverse a directed path $P_{uv}$ from $u$ to $v$ and thus increase $\delta_D^+(u)$ by one, decrease $\delta_D^+(v)$ by one, and leave the remaining outdegrees unchanged. More generally we have:

\begin{observation}\label{claim:break-connect}
Let $D$ be an orientation of graph $G$ and $X \subseteq V(G)$. If $D'$ is obtained from $D$ by reversing a path from a vertex $u$ to a vertex $v$, then we have 
\begin{enumerate}
	\item $\delta^+_{D'}(X) = \delta^+_{D}(X)$ if $u,v \in X$ or $u,v \notin X$
	\item $\delta^+_{D'}(X) = \delta^+_{D}(X) +1$ if $u \notin X$ and $v \in X$
	\item $\delta^+_{D'}(X) = \delta^+_{D}(X) -1$ if $u \in X$ and $v \notin X$
\end{enumerate}
\end{observation}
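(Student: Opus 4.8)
The statement is the standard "path reversal shifts outdegree across a cut" fact, and I would prove it by tracking exactly which arcs crossing the cut $(X, V\setminus X)$ change direction when the path $P$ from $u$ to $v$ is reversed. First I would set up notation: let $P = (u = w_0, w_1, \dots, w_\ell = v)$ be the underlying vertex sequence of the directed path, so that $P$ consists of the arcs $(w_{i-1}, w_i)$ for $i = 1, \dots, \ell$, and $D'$ is obtained from $D$ by replacing each such arc with $(w_i, w_{i-1})$. The key observation is that an arc $(w_{i-1}, w_i)$ of $P$ contributes to $\delta^+_D(X)$ iff $w_{i-1} \in X \not\ni w_i$, and after reversal the arc $(w_i, w_{i-1})$ contributes to $\delta^+_{D'}(X)$ iff $w_i \in X \not\ni w_{i-1}$; arcs not on $P$ are unchanged. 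Hence the net change $\delta^+_{D'}(X) - \delta^+_D(X)$ equals the number of indices $i$ where the path steps from outside $X$ into $X$ (each contributing $+1$) minus the number of indices where it steps from inside $X$ to outside (each contributing $-1$).

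Next I would argue that this net change is a telescoping quantity depending only on the endpoints. Define, for each vertex $w_j$ on the path, the indicator $\chi_j = 1$ if $w_j \in X$ and $\chi_j = 0$ otherwise. A step $i$ contributes $+1$ exactly when $\chi_{i-1} = 0$ and $\chi_i = 1$, i.e.\ when $\chi_i - \chi_{i-1} = +1$; it contributes $-1$ exactly when $\chi_i - \chi_{i-1} = -1$; and it contributes $0$ when $\chi_i = \chi_{i-1}$. Therefore
\[
\delta^+_{D'}(X) - \delta^+_{D}(X) = \sum_{i=1}^{\ell} (\chi_i - \chi_{i-1}) = \chi_\ell - \chi_0 = [v \in X] - [u \in X].
\]
Reading off the four cases: if $u, v$ are both in $X$ or both outside, the right-hand side is $0$, giving case~1; if $u \notin X$ and $v \in X$ it is $+1$, giving case~2; if $u \in X$ and $v \notin X$ it is $-1$, giving case~3.

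\textbf{Main obstacle.} There is no real obstacle here — the result is essentially a bookkeeping exercise. The only point that needs a word of care is the possibility that the path $P$ is not a simple path (the paper works with multigraphs and does not insist paths be simple), so a vertex $w_j$ might be visited more than once and some arcs crossing the cut might flip more than once. The telescoping argument above is robust to this: it sums over \emph{steps} of the walk rather than over distinct arcs, so repeated visits simply appear as repeated terms in the telescoping sum, which still collapses to $\chi_\ell - \chi_0$. If one instead insists that $P$ be an arc-simple (or vertex-simple) path, the argument only simplifies. I would also note in passing that one must be slightly careful when $G$ has parallel or anti-parallel arcs, but since reversal acts arc-by-arc and the contribution of each arc to $\delta^+(X)$ is counted with multiplicity, the same computation goes through verbatim.
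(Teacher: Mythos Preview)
Your proof is correct and cleanly written; the telescoping via the indicator $\chi_j = [w_j \in X]$ is exactly the right way to see that only the endpoints matter. Note that the paper itself gives no proof of this statement---it is recorded as an \emph{Observation} and used without justification---so there is nothing to compare against, and your argument would serve perfectly well as the omitted verification. Your remark about non-simple paths is a nice touch but ultimately unnecessary here: in the paper a ``directed path'' is vertex-simple, so each arc is reversed exactly once and the subtlety does not arise.
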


In order to maintain connectivity, we have to reverse paths without decreasing the number of arc-disjoint directed paths between pairs of vertices too much.

\begin{lemma}\label{lem:pathflipping}
Let $P_{uv}$ be a directed path from vertex $u$ to vertex $v$ in $D$. For all vertices $u',v'$, we have $\lambda_{D^{P_{uv}}}(u',v') \geq \min(\lambda_{D}(u,v)-1,\lambda_D(u',v'))$. Furthermore, we have $\lambda_{D^{P_{uv}}}(u,v)=\lambda_{D}(u,v)-1$.
\end{lemma}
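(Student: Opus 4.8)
The plan is to prove both statements using the Local Menger Theorem (\Cref{thm:Menger}), which translates statements about arc-disjoint paths into statements about outdegrees of vertex subsets. This is convenient because \Cref{claim:break-connect} tells us exactly how reversing the path $P_{uv}$ affects $\delta^+(X)$: it drops by one precisely when $u \in X \not\ni v$, rises by one when $v \in X \not\ni u$, and is unchanged otherwise.

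\textbf{The lower bound.} First I would prove $\lambda_{D^{P_{uv}}}(u',v') \geq \min(\lambda_D(u,v)-1, \lambda_D(u',v'))$ for arbitrary $u',v'$. By \Cref{thm:Menger}, it suffices to show that every set $X$ with $u' \in X \not\ni v'$ satisfies $\delta^+_{D^{P_{uv}}}(X) \geq \min(\lambda_D(u,v)-1, \lambda_D(u',v'))$. Fix such an $X$. I would split into cases according to the position of $u$ and $v$ relative to $X$, using \Cref{claim:break-connect}. In the three cases where $u \notin X$ or $v \in X$, we have $\delta^+_{D^{P_{uv}}}(X) \geq \delta^+_D(X) \geq \lambda_D(u',v')$ directly, since $X$ separates $u'$ from $v'$ in $D$ (using \Cref{thm:Menger} again for the last inequality). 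In the remaining case $u \in X \not\ni v$, \Cref{claim:break-connect} gives $\delta^+_{D^{P_{uv}}}(X) = \delta^+_D(X) - 1$; but now $X$ separates $u$ from $v$ \emph{as well as} $u'$ from $v'$ in $D$, so $\delta^+_D(X) \geq \max(\lambda_D(u,v), \lambda_D(u',v')) \geq \lambda_D(u,v)$, hence $\delta^+_{D^{P_{uv}}}(X) \geq \lambda_D(u,v) - 1$. In all cases the bound holds, so the minimum over all separating $X$ — which equals $\lambda_{D^{P_{uv}}}(u',v')$ — is at least $\min(\lambda_D(u,v)-1, \lambda_D(u',v'))$.

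\textbf{The exact value $\lambda_{D^{P_{uv}}}(u,v) = \lambda_D(u,v) - 1$.} Applying the lower bound with $(u',v') = (u,v)$ gives $\lambda_{D^{P_{uv}}}(u,v) \geq \min(\lambda_D(u,v)-1, \lambda_D(u,v)) = \lambda_D(u,v)-1$. For the matching upper bound, I would take a set $X$ attaining the minimum in \Cref{thm:Menger} for $\lambda_D(u,v)$, i.e.\ $u \in X \not\ni v$ with $\delta^+_D(X) = \lambda_D(u,v)$. Since $u \in X \not\ni v$, case 3 of \Cref{claim:break-connect} applies and $\delta^+_{D^{P_{uv}}}(X) = \lambda_D(u,v) - 1$. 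As this particular $X$ still separates $u$ from $v$ in $D^{P_{uv}}$, \Cref{thm:Menger} gives $\lambda_{D^{P_{uv}}}(u,v) \leq \delta^+_{D^{P_{uv}}}(X) = \lambda_D(u,v)-1$, completing the equality.

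\textbf{Main obstacle.} The argument is essentially a careful case analysis, so there is no deep obstacle; the one point to handle cleanly is making sure that in the lower-bound proof I am correctly using \Cref{thm:Menger} in the direction ``$\delta^+_D(X) \geq \lambda_D(u',v')$ whenever $X$ separates $u'$ from $v'$'', which is immediate since $\lambda_D(u',v')$ is the \emph{minimum} of $\delta^+_D(\cdot)$ over all such separating sets. I should also take care that the path $P_{uv}$ being a genuine directed path (not a walk) is what lets \Cref{claim:break-connect} apply verbatim; this was already assumed in the statement.
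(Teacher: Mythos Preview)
Your proposal is correct and takes essentially the same approach as the paper: both proofs combine \Cref{thm:Menger} with the case analysis of \Cref{claim:break-connect} on whether $u,v$ lie in or out of a separating set $X$. The only cosmetic difference is that the paper obtains the equality $\lambda_{D^{P_{uv}}}(u,v)=\lambda_D(u,v)-1$ by observing that when $(u',v')=(u,v)$ only the ``$-1$'' case of the partition survives, whereas you derive the upper bound separately by exhibiting a minimizing cut; the content is the same.
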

%\begin{proof}
%Let $X\subseteq V$. Then $\delta^+_{D^{P_{uv}}}(X)=\delta^+_{D}(X)-1$ if and only if $u\in X\not\ni v$. Thus by Menger's Theorem (\Cref{thm:Menger})  $\lambda_{D^{P_{uv}}}(u',v')\leq\delta^+_{D}(X)-1$ if $u'\in X\not\ni v'$. Thus, for an arbitrary $u',v'$ we have $\lambda_{D^{P_{uv}}}(u',v')\geq \lambda_{D}(u,v)-1$ and for $u,v$ it drops exactly to $\delta^+_{D}(X)-1$ for the minimaml $\delta^+_{D}(X)$ for all $u\in X\not\ni v$. which coincides with $\lambda_{D}(u,v)-1$.
%\sarah{I don't understand this proof: 
%$\lambda_{D^{P_{uv}}}(u,v) = \lambda_{D}(u,v)-1$ OK : \\
%\begin{align*}
%\lambda_{D^{P_{uv}}}(u,v) &= min\{\delta_{D^{P_{u,v}}}|u\in X,v\notin X\}\\
%&= min\{\delta_{D}-1|u\in X,v\notin X\}\\
%&= \lambda_D(u,v) -1
%\end{align*}
%But for $u'\neq u$ and $v' \neq v$?
%}
%\end{proof}
\begin{proof}
With Menger's Theorem (\Cref{thm:Menger}) and Observation~\ref{claim:break-connect}, \Cref{lem:pathflipping} can be easily proved:

\smallskip
\begin{tabular}{rlr}
$\lambda_{D^{P_{uv}}}(u',v')$ &= $\min\{\delta^+_{D^{P_{uv}}}(X) \; | \; X \subset V, v' \notin X\ni u'\}$ &(\Cref{thm:Menger}) \\
~&$= \min(\{\delta^+_D(X) - 1 \; | \; X \subseteq V, v,v' \notin X\ni u',u\}$&(Observation~\ref{claim:break-connect})\\
~&$\cup\{\delta^+_D(X) \; | \; X \subseteq V, v' \notin X\ni u',u, v \text{ or } v',u, v \notin X\ni u'\}$ &\\
~&$\cup\{\delta^+_D(X) + 1 \; | \; X \subseteq V, u,v' \notin X\ni u',v\})$&~\\
~&$\geq \min\{ \lambda_D(u,v)-1, \lambda_D(u',v') \}$ &(\Cref{thm:Menger})
\end{tabular}
\smallskip

Note that in the case $u'=u$ and $v'=v$ the second and third part of the union in the above equation are empty. We thus get:

\smallskip
\begin{tabular}{rlr}
$\lambda_{D^{P_{uv}}}(u,v)$ &= $\min\{\delta^+_{D^{P_{uv}}}(X) \; | \; X \subseteq V, u' \in X, v' \notin X\}$ &(\Cref{thm:Menger}) \\
~&$=\min\{\delta^+_{D^{P}}(X)-1 \; | \; X \subseteq V, u' \in X, v' \notin X\}$ &(Observation \ref{claim:break-connect})\\
~&$=\lambda_D(u,v)-1$ &(\Cref{thm:Menger})
\end{tabular}

\qed
\end{proof}

 In a $k$-connected digraph $D$ we call a directed path $P$ \emph{flippable} if $D^P$ is $k$-connected. \Cref{lem:pathflipping} implies that a path from $u$ to $v$ is flippable, if and only if all of them are which is equivalent to $\lambda_D(u,v)>k$. In this case we call the pair $(u,v)$ \emph{flippable}. The following gives an algorithm, that is basically equivalent to the Edmonds-Karp algorithm for maximum flows~\cite{D70,EK72}:

\begin{lemma}\label{lem:computelambda}
Let $D$ be $k$-connected, it can be decided in time $O(km)$ if $(u,v)$ is flippable.
\end{lemma}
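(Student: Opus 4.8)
The plan is to reduce the decision to one maximum-flow computation and then bound its cost using the hypothesis that $D$ is already $k$-connected. Recall from the discussion preceding the statement that $(u,v)$ is flippable if and only if $\lambda_D(u,v) > k$, i.e.\ if and only if $\lambda_D(u,v) \geq k+1$. So it suffices to decide whether there are at least $k+1$ pairwise arc-disjoint directed paths from $u$ to $v$ in $D$. First I would set up the standard unit-capacity flow network on $D$ with source $u$ and sink $v$ (every arc has capacity $1$), so that by the integral max-flow/min-cut theorem and \Cref{thm:Menger} the value of a maximum integral $u$--$v$ flow equals $\lambda_D(u,v)$. Since $D$ is $k$-connected we already know $\lambda_D(u,v) \geq k$, so rather than computing the full maximum flow we only need to test whether the value is strictly larger than $k$: start from a known feasible flow of value $k$ (obtained cheaply, e.g.\ by greedily extracting $k$ arc-disjoint $u$--$v$ paths, which exist by $k$-connectivity) and then run a single additional augmenting-path search in the residual network.

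The key steps, in order, are: (1) build the unit-capacity network; (2) find $k$ arc-disjoint directed paths from $u$ to $v$ — each is one BFS/DFS in the current residual graph, costing $O(m)$, for a total of $O(km)$; (3) perform one more BFS in the residual network looking for an augmenting $u$--$v$ path; (4) report \emph{flippable} if and only if this last search succeeds. Correctness follows because after step (2) we have an integral flow of value exactly $k$, and a flow of value $k$ is maximum if and only if there is no augmenting path; hence the search in step (3) finds one precisely when $\lambda_D(u,v) \geq k+1$, i.e.\ precisely when $(u,v)$ is flippable. For the running time, steps (2) and (3) together perform $k+1$ graph searches, each $O(m)$ (the search stays within the strongly connected component containing $u$, as in the proof of \Cref{thm:alpha}, and $n \leq m$ since $D$ is $k$-connected with $k\ge 1$), so the total is $O(km)$ as claimed. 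This is exactly the Edmonds--Karp-type behavior alluded to in the statement: on unit-capacity networks each augmentation costs $O(m)$, and here we need only $k+1$ of them because we may stop as soon as the target value $k+1$ is reached or shown unreachable.

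I would then note that this also delivers a flippable path as a by-product when one exists: tracing back the $k+1$ arc-disjoint paths in the final flow gives an explicit directed $u$--$v$ path $P$ with $D^P$ still $k$-connected, which is what the subsequent algorithm will need. The main obstacle is not conceptual but bookkeeping: one must be careful that the "augmenting path" in the residual network, which may traverse some arcs backwards, still yields (after the standard flow-decomposition cancellation) a genuine set of $k+1$ arc-disjoint \emph{directed} paths in $D$, and hence by \Cref{lem:pathflipping} a genuine flippable directed path; this is the routine correctness argument for Menger's theorem via flows, so I would invoke it rather than re-derive it. A secondary point to handle cleanly is the initial extraction of $k$ arc-disjoint paths: it is simplest to obtain these by the same augmenting-path routine run $k$ times from the zero flow, which keeps the whole procedure a uniform sequence of $k+1$ searches and makes the $O(km)$ bound transparent.
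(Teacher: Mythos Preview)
Your proposal is correct and takes essentially the same approach as the paper: run $k+1$ successive path searches, each costing $O(m)$, and declare $(u,v)$ flippable iff all succeed. The only difference is packaging: the paper phrases each iteration as ``find a directed $u$--$v$ path in the current digraph and reverse it,'' invoking \Cref{lem:pathflipping} (which gives $\lambda_{D^{P_{uv}}}(u,v)=\lambda_D(u,v)-1$) for correctness, whereas you phrase the identical operation as an augmenting-path step in a unit-capacity flow network and invoke max-flow/min-cut. Since reversing a path in $D$ is exactly the residual-graph update for unit capacities, the two descriptions are the same algorithm; the paper's version is just shorter because it can lean on the lemma it has just proved rather than re-importing flow theory.
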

\begin{proof}
By~\Cref{lem:pathflipping} a simple algorithm consists in finding a directed path $P_{uv}$ in $D$, reverse it and iterate in $D^{P_{uv}}$. We have $\lambda_D(u,v)> k$ if and only if this procedure can be applied $k+1$ times. Each execution is a BFS, which yields the claimed runtime.
\qed
\end{proof}

The following is a slight refinement of a result of Frank~\cite{DBLP:journals/dm/Frank82} and constitutes the last ingredient for our algorithm:
\begin{lemma}\label{lem:deg-diff}
Let $G=(V,E)$ be a graph and $D$, $D'$ be two $k$-connected orientations of $G$.
For every $v\in V$ with $\delta_{D}^{+}(v) < \delta_{D'}^{+}(v)$, there exists $u\in V$ such that $\delta_{D}^{+}(u) > \delta_{D'}^{+}(u)$ and $(u,v)$ is flippable in $D$.
\end{lemma}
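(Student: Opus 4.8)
The plan is to consider the ``difference digraph'' between $D$ and $D'$ and extract from it a flippable pair $(u,v)$ witnessing the outdegree imbalance at $v$. First I would look at the set of arcs $B=A\setminus A'$ of $D$ whose direction is reversed in $D'$, together with the orientation they receive in $D$. At a vertex $w$, the quantity $\delta^+_{D}(w)-\delta^+_{D'}(w)$ equals the net outflow of $B$ at $w$ (arcs of $B$ leaving $w$ in $D$ minus arcs of $B$ entering $w$ in $D$), because outside $B$ the two orientations agree. Hence for our fixed $v$ with $\delta^+_D(v)<\delta^+_{D'}(v)$, strictly more arcs of $B$ enter $v$ in $D$ than leave it, so the subgraph $D[B]$ has a vertex of negative excess at $v$; more generally the set $S$ of vertices with strictly negative excess is nonempty (it contains $v$) and the set $T$ of vertices with strictly positive excess is nonempty as well (since the total excess is $0$, and by a conservation/Euler-type argument inside $D[B]$). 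Following the flow of $B$-arcs backwards from $v$ in $D[B]$, I would find a directed path in $D[B]\subseteq D$ from some vertex $u$ with positive excess to $v$; concretely, in $D[B]$ start at $v$, which has indegree exceeding outdegree, and repeatedly walk backwards along $B$-arcs — by the excess condition this walk cannot get permanently stuck at vertices of nonpositive excess, so it reaches a vertex $u$ with $\delta^+_D(u)>\delta^+_{D'}(u)$, yielding a directed path $P_{uv}$ in $D$ from $u$ to $v$ all of whose arcs lie in $B$.

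Next I would argue that this pair $(u,v)$ is flippable in $D$, i.e.\ $\lambda_D(u,v)>k$. The key observation is that $D'$ is obtained from $D$ by reversing exactly the arc set $B$, and $P_{uv}\subseteq B$, so in $D'$ the reversed path $P_{uv}^-$ is a directed path from $v$ to $u$. Now suppose for contradiction that $\lambda_D(u,v)=k$ (it is $\ge k$ since $D$ is $k$-connected). By \Cref{thm:Menger} there is a set $X$ with $u\in X\not\ni v$ and $\delta^+_D(X)=k$. I want to show that after reversing $B$ to get $D'$, the cut $X$ has outdegree $<k$, contradicting $k$-connectivity of $D'$. The path $P_{uv}$ goes from $u\in X$ to $v\notin X$, so it crosses the cut from inside to outside a net of one more time than outside to inside; each inside-to-outside crossing arc of $P_{uv}$ is in $B$ and hence becomes an outside-to-inside arc in $D'$, decreasing $\delta^+(X)$, while each outside-to-inside crossing arc of $P_{uv}$ increases it — net effect on the $P_{uv}$-arcs is a decrease by at least $1$. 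The remaining arcs of $B$ (those not on $P_{uv}$) crossing $X$ could increase $\delta^+(X)$, so this alone is not enough; I need to choose $X$ and/or the path more carefully.

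The cleaner route for the flippability step — and the step I expect to be the main obstacle — is to bypass cut-counting and instead directly produce $k+1$ arc-disjoint $u\to v$ paths in $D$. Here is the idea: in $D'$ we have $\lambda_{D'}(v,u)\ge k$, so there are $k$ arc-disjoint directed paths $Q_1,\dots,Q_k$ from $v$ to $u$ in $D'$; reversing $B$ back, each arc of each $Q_i$ either already lies in $D$ (if it is outside $B$, hence common to $D$ and $D'$) or corresponds to a reversed $B$-arc. Combining the $Q_i^-$ (now $u\to v$ walks using $D$-arcs on the non-$B$ part and $B$-arcs) with the $B$-arcs themselves and doing an arc-disjoint path decomposition/flow augmentation, one shows $\lambda_D(u,v)\ge k+1$: intuitively, $D$ already supports $k$ arc-disjoint $v\to u$ paths (it is $k$-connected), and the strict imbalance of $B$ at $v$ provides one more unit of $u\to v$ connectivity that the $v\to u$ flow cannot absorb. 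Formally I would set up a flow network, push $k$ units of $v\to u$ flow in $D$, observe that the residual graph still has a $u\to v$ path because the $B$-arcs entering $v$ outnumber those leaving $v$ and the $k$-flow from $v$ cannot saturate all of them, and conclude $\lambda_D(u,v)>k$ via the max-flow min-cut / Menger correspondence of \Cref{thm:Menger}. The delicate point is the bookkeeping ensuring the extra augmenting path is genuinely arc-disjoint from the $k$ chosen paths; I would handle this by working entirely in the residual network rather than with fixed path systems, so that arc-disjointness is automatic.
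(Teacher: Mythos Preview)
Your plan diverges from the paper's proof, and the divergence matters. The paper does \emph{not} try to pick out a particular $u$ and certify it directly; instead it argues by contradiction: if every $u$ with $\delta_D^+(u)>\delta_{D'}^+(u)$ were separated from $v$ by a tight cut $X_u$ with $\delta_D^+(X_u)=k$, then the inclusion-maximal such cuts form a pairwise disjoint family (by submodularity/uncrossing), and a double count of the edges leaving this family in $D$ versus $D'$ yields a strict inequality $c<c$. So the existence of a flippable $u$ falls out globally, without ever exhibiting a path in the difference digraph.

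Your route has a genuine gap at the flippability step. Finding some $u$ with $\delta_D^+(u)>\delta_{D'}^+(u)$ and a $D[B]$-path to $v$ is fine; but this particular $u$ need not be flippable. Concretely, if $\lambda_D(u,v)=k$ there is a tight cut $X$ with $u\in X\not\ni v$ and $\delta_D^+(X)=k$; since $\delta_{D'}^+(X)\ge k$, the net $B$-outflow from $X$ in $D$ is $\le 0$, so the $+1$ contributed by your path $P_{uv}$ is cancelled by other $B$-arcs crossing into $X$. Nothing contradicts this, so the cut-counting attempt you started (and correctly abandoned) indeed fails. Your fallback arguments do not close the gap either: the ``$k$ arc-disjoint $v\to u$ paths in $D'$ plus $P_{uv}$'' idea mixes paths living in two different orientations and gives no arc-disjoint system in $D$; and the residual-network sketch is off --- pushing $k$ units of $v\to u$ flow in $D$ says nothing about $\lambda_D(u,v)$, and such a flow uses \emph{out}-arcs of $v$, so the surplus of $B$-arcs \emph{entering} $v$ is irrelevant to saturation. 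In short, the difference-digraph walk can land on the ``wrong'' $u$, and none of the proposed mechanisms rule that out. The clean fix is exactly the paper's: assume all candidate $u$'s are blocked by tight cuts, uncross to get a disjoint family, and count.
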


\begin{proof}
Let $v\in V$ be with $\delta_{D}^{+}(v) < \delta_{D'}^{+}(v)$. We will prove that there is a $u\in V$ such that $\delta_{D}^{+}(u) > \delta_{D'}^{+}(u)$ and $\delta_{D}^{+}(X) > k$ whenever $u \in X$ and $v \notin X$. By Menger's Theorem (\Cref{thm:Menger}) this implies that $(u,v)$ is flippable.
Since $\delta_{D}^{+}(v) < \delta_{D'}^{+}(v)$ and because $\sum\limits_{w\in V}\delta^+_D(w)=|E|=\sum\limits_{w\in V}\delta^+_{D'}(w)$, there exists at least one vertex $u$ such that $\delta_{D}^{+}(u) > \delta_{D'}^{+}(u)$ .
By contradiction suppose that every vertex $u$ with $\delta_{D}^{+}(u) > \delta_{D'}^{+}(u)$, is contained in a set $X$ with $\delta_D^+(X)=k$ and $v\notin X$.
Among all these sets for all such $u$, we consider those that are inclusion maximal and collect them in $\mathcal{X}$. 

Let us first prove that the members of $\mathcal{X}$ are pairwise disjoint. Indeed, consider two sets $X,X'$ with $\delta^+_D(X)=\delta^+_D(X')=k$ and $v\notin X \cup X'$ with $X\cap X'\neq\emptyset$. Since $X \cup X'$, $X \cap X'$ are both not empty, $X \cup X'\neq V$ and $D$ is $k$-connected, we know that $\delta_{D}^{+}(X \cup X') \geq k$ and $\delta_{D}^{+}(X \cap X') \geq k$. Therefore, we have
$$ k+k = \delta_{D}^{+}(X) + \delta_{D}^{+}(X') \geq \delta_{D}^{+}(X \cup X') + \delta_{D}^{+}(X \cap X') \geq k+k$$
This gives $\delta_{D}^{+}(X \cup X') = k$, i.e., $X$ or $X'$ is not maximal.

Now, let us count the number $c$ of edges of $G$ not contained in any subgraph of $G$ induced by $X\in\mathcal{X}$ and denote $Y=V\setminus\bigcup_{X\in\mathcal{X}}X$. Observe that since the elements of $\mathcal{X}$ are disjoint, in any orientation of $G$ the number $c$ is just the sum of outdegrees of $X\in\mathcal{X}$ plus the sum of outdegrees of vertices of $Y$. Thus, if we furthermore denote $t=|\mathcal{X}|$ we can do this counting with respect to $D$ and $D'$ and obtain the following contradiction

$$c =  \sum \limits_{X \in \mathcal{X}}\delta_{D}^{+}(X) + \sum \limits_{w \in Y}\delta_{D}^{+}(w) = k t + \sum \limits_{w \in Y}\delta_{D}^{+}(w) < \sum \limits_{X \in \mathcal{X}}\delta_{D'}^{+}(X) + \sum \limits_{w \in Y}\delta_{D'}^{+}(w)=c$$

More precisely, since $D'$ is $k$-connected we have that $kt\leq \sum \limits_{X \in \mathcal{X}}\delta_{D'}^{+}(X)$. Recall that we supposed that if $w$ is such that $\delta_{D}^{+}(w) > \delta_{D'}^{+}(w)$, then $w \notin Y$. Therefore, we have $\delta_{D}^{+}(w) \leq \delta_{D'}^{+}(w)$ for all $w \in Y$. Adding this to the fact that $\delta_{D}^{+}(v) < \delta_{D'}^{+}(v)$ and $v\in Y$, we obtain the strict inequality claimed above. This concludes the proof.
\qed
\end{proof}

Note that a complete analogue of~\Cref{lem:deg-diff} holds for the case $\delta_{D}^{+}(v) > \delta_{D'}^{+}(v)$:

\begin{lemma}\label{lem:deg-diff-bis}
Let $G=(V,E)$ be a graph and $D$, $D'$ be $k$-connected orientations of $G$.
For every $v\in V$ with $\delta_{D}^{+}(v) > \delta_{D'}^{+}(v)$, there exists $u\in V$ such that $\delta_{D}^{+}(u) < \delta_{D'}^{+}(u)$ and $(v,u)$ is flippable in $D$.
\end{lemma}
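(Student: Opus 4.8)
The plan is to reduce Lemma~\ref{lem:deg-diff-bis} to Lemma~\ref{lem:deg-diff} by a symmetry argument, namely by passing to reversed orientations. Observe that if $D$ is a $k$-connected orientation of $G$, then $D^-$ is also a $k$-connected orientation of $G$, since $\delta^+_{D^-}(X) = \delta^-_D(X) = \delta^+_D(V\setminus X)$ and $D$ being $k$-connected is equivalent to $\delta^+_D(Y)\ge k$ for all $\emptyset\neq Y\subsetneq V$. Moreover, $\delta^+_{D^-}(v) = \delta^-_D(v) = \deg_G(v) - \delta^+_D(v)$, so for any two orientations $D,D'$ we have $\delta^+_D(v) > \delta^+_{D'}(v)$ if and only if $\delta^+_{D^-}(v) < \delta^+_{D'^-}(v)$.

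So first I would apply Lemma~\ref{lem:deg-diff} to the pair $D^-, D'^-$: given $v$ with $\delta^+_D(v) > \delta^+_{D'}(v)$, i.e.\ $\delta^+_{D^-}(v) < \delta^+_{D'^-}(v)$, Lemma~\ref{lem:deg-diff} yields a vertex $u$ with $\delta^+_{D^-}(u) > \delta^+_{D'^-}(u)$ — equivalently $\delta^+_D(u) < \delta^+_{D'}(u)$, which is exactly the degree condition we want — and with $(u,v)$ flippable in $D^-$. It remains to translate ``$(u,v)$ flippable in $D^-$'' into ``$(v,u)$ flippable in $D$''. By the discussion following Lemma~\ref{lem:pathflipping}, $(u,v)$ flippable in $D^-$ means $\lambda_{D^-}(u,v) > k$, and $(v,u)$ flippable in $D$ means $\lambda_D(v,u) > k$. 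Using Menger (\Cref{thm:Menger}), $\lambda_{D^-}(u,v) = \min\{\delta^+_{D^-}(X) \mid u\in X\not\ni v\} = \min\{\delta^+_D(V\setminus X)\mid u\in X\not\ni v\} = \min\{\delta^+_D(Y)\mid v\in Y\not\ni u\} = \lambda_D(v,u)$, after substituting $Y = V\setminus X$. Hence $(u,v)$ flippable in $D^-$ is equivalent to $(v,u)$ flippable in $D$, which completes the argument.

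I expect no real obstacle here; the only point requiring a little care is the bookkeeping of the direction of the flippable pair under reversal — one must make sure that the pair produced by Lemma~\ref{lem:deg-diff} applied to $D^-$ is $(u,v)$ (with $v$ the low-outdegree vertex in $D^-$, consistent with the statement of Lemma~\ref{lem:deg-diff}), and that reversing arcs swaps the roles of source and sink, so it becomes $(v,u)$ in $D$. Alternatively, if one prefers a self-contained argument, the entire proof of Lemma~\ref{lem:deg-diff} can be repeated verbatim with all inequalities reversed and with the role of the cut sets $X$ replaced by their complements (equivalently, working with sets $X$ satisfying $\delta^+_D(X)=k$, $u\notin X$, $v\in X$); but the reversal trick is cleaner and shorter.

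\begin{proof}
Consider the reversed orientations $D^-$ and $D'^-$ of $G$. For every $\emptyset\neq X\subsetneq V$ we have $\delta^+_{D^-}(X)=\delta^+_D(V\setminus X)\ge k$ since $D$ is $k$-connected, and likewise for $D'^-$; hence $D^-$ and $D'^-$ are $k$-connected orientations of $G$. Moreover $\delta^+_{D^-}(w)=\deg_G(w)-\delta^+_D(w)$ for every $w\in V$, and similarly for $D'$. Thus the hypothesis $\delta^+_D(v)>\delta^+_{D'}(v)$ becomes $\delta^+_{D^-}(v)<\delta^+_{D'^-}(v)$. Applying~\Cref{lem:deg-diff} to $D^-$, $D'^-$ and the vertex $v$, we obtain $u\in V$ with $\delta^+_{D^-}(u)>\delta^+_{D'^-}(u)$ and $(u,v)$ flippable in $D^-$. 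Translating back, $\delta^+_{D^-}(u)>\delta^+_{D'^-}(u)$ is equivalent to $\delta^+_D(u)<\delta^+_{D'}(u)$. Finally, by~\Cref{thm:Menger} and the substitution $Y=V\setminus X$,
$$\lambda_{D^-}(u,v)=\min\{\delta^+_{D^-}(X)\mid u\in X\not\ni v\}=\min\{\delta^+_D(V\setminus X)\mid u\in X\not\ni v\}=\min\{\delta^+_D(Y)\mid v\in Y\not\ni u\}=\lambda_D(v,u),$$
so $(u,v)$ flippable in $D^-$ (i.e.\ $\lambda_{D^-}(u,v)>k$) is equivalent to $(v,u)$ flippable in $D$ (i.e.\ $\lambda_D(v,u)>k$). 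This is exactly the assertion of the lemma.
\qed
\end{proof}
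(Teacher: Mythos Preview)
Your proof is correct and follows essentially the same approach as the paper: both reduce to \Cref{lem:deg-diff} by passing to the reversed orientations $D^-$ and $D'^-$ and translating the degree and flippability conditions back. Your version is in fact more explicit than the paper's, since you spell out via \Cref{thm:Menger} why $(u,v)$ flippable in $D^-$ is equivalent to $(v,u)$ flippable in $D$, whereas the paper leaves this translation implicit.
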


\begin{proof}
Observe that an equivalent statement of the lemma is that for every vertex $v$ such that $\delta_{D}^{-}(v) < \delta_{D'}^{-}(v)$, there exists a vertex $u$ such that $\delta_{D}^{-}(u) > \delta_{D'}^{-}(u)$ and $(v,u)$ is flippable in $D$. By fully reorienting all the arcs of $D$ and $D'$, we obtain graphs $D^-$ and $D'^-$ and get a further equivalent statement: for every vertex $v$ such that $\delta_{D^-}^{+}(v) < \delta_{D'^-}^{+}(v)$, there exists a vertex $u$ such that $\delta_{D^-}^{+}(u) > \delta_{D'^-}^{+}(u)$ and $(u,v)$ is flippable in $D^-$. This is precisely the statement of \Cref{lem:deg-diff}, so we are done.
\qed
\end{proof}

Now we are ready de describe the general enumeration algorithm for $k$-connected outdegree sequences:

\begin{algorithm}[H]\label{algo:tree}
\caption{Enumeration of $k$-connected outdegree sequences}
%\LinesNumbered
\BlankLine
\KwIn{A graph $G=(V,E)$, an integer $k$}
\KwOut{All $k$-connected outdegree sequences of $G$}
\BlankLine
\SetAlgoLined

\SetKwFunction{FEnODS}{EnODS}
\SetKwFunction{FReverseMinus}{$\text{Reverse}^-$}
\SetKwFunction{FReversePlus}{$\text{Reverse}^+$}
\SetKwProg{Fn}{Function}{:}{end}
  \Begin{  \SetAlgoVlined
    \If{there exists a $k$-connected orientation $D$ of $G$}{
            Fix an arbitrary linear order on $V$\;
            \FEnODS{$D$, $\emptyset$}\;
    }
  }

\bigskip

\Fn{\FEnODS{$D$, $F$}}{
    \SetAlgoVlined
    \eIf{$F \neq V$}{
        take the smallest $v\in V\setminus F$\;
        \FReverseMinus{$D$, $F$, $v$}\;
        \FReversePlus{$D$, $F$, $v$}\;
        \FEnODS{$D,F\cup\{v\}$}\;
    }
    {Output $\delta^+_D$\;}
  }

\bigskip

\SetKwFunction{FEnODS}{EnODS}
\SetKwFunction{FReverseMinus}{$\text{Reverse}^-$}
\SetKwFunction{FReversePlus}{$\text{Reverse}^+$}
\SetKwProg{Fn}{Function}{:}{end}

%\setcounter{AlgoLine}{0}
% This is to restore vline mode if you did not take the package as \usepackage[linesnumbered,ruled,vlined]{algorithm2e}
%  \SetAlgoVlined
%This is to hide Begin keyword
\Fn{\FReverseMinus{$D$, $F$, $v$}}{
\SetAlgoVlined
\If{there exists $u \in V\setminus F$ such that $(v,u)$ is flippable}{
Take a directed path $P_{vu}$ from $v$ to $u$ \;
\FReverseMinus{$D^{P_{vu}}$, $F$, $v$} \;
\FEnODS{$D^{P_{vu}}$, $F\cup\{v\}$}\;
}
}
\bigskip

%\LinesNumbered
%\setcounter{AlgoLine}{0}
% This is to restore vline mode if you did not take the package as \usepackage[linesnumbered,ruled,vlined]{algorithm2e}
%  \SetAlgoVlined
%This is to hide Begin keyword
\Fn{\FReversePlus{$D$, $F$, $v$}}
{  \SetAlgoVlined
\If{there exists $u \in V\setminus F$ such that $(u,v)$ is flippable}{
Take a directed path $P_{uv}$ from $u$ to $v$ \;
\FReversePlus{$D^{P_{uv}}, F, v$} \;
\FEnODS{$D^{P_{uv}}, F\cup\{v\}$}\;
}
}
\end{algorithm}

\begin{lemma}\label{thm:sequences}
Let $D$ be a $k$-connected orientation of $G=(V,E)$ and  $F\subseteq V$. The function \FEnODS{$D$,$F$} enumerates the $k$-connected outdegree sequences coinciding with $D$ on $F$ with time delay in $O(knm^2)$.
\end{lemma}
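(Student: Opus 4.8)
The plan is to prove correctness (each $k$-connected outdegree sequence coinciding with $D$ on $F$ is enumerated exactly once) and then bound the delay. For correctness I would induct on $|V\setminus F|$, exactly as in the proof of \Cref{thm:alpha}. The base case $F=V$ is trivial: \FEnODS{$D$,$V$} outputs $\delta^+_D$ and nothing else. For the inductive step, fix the smallest $v\in V\setminus F$. The three recursive branches partition the solutions according to the value $\alpha(v)$ of the outdegree at $v$: the call \FEnODS{$D,F\cup\{v\}$} handles sequences with $\alpha(v)=\delta^+_D(v)$; the recursion through \FReverseMinus{} reaches, for each $j\geq 1$, orientations obtained by reversing $j$ successive flippable paths leaving $v$ (so $\alpha(v)=\delta^+_D(v)-j$), and then fixes $v$; symmetrically \FReversePlus{} reaches $\alpha(v)=\delta^+_D(v)+j$. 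Crucially, once a path $P_{vu}$ with $u\in V\setminus F$ is reversed, \Cref{lem:pathflipping} together with \Cref{lem:alphakconnected} and the characterization "$(u,v)$ flippable $\iff\lambda(u,v)>k$" guarantees the resulting orientation is again $k$-connected, and the outdegrees on $F$ are unchanged since $P_{vu}$ has both endpoints outside $F$ (hence avoids changing $\delta^+$ at any fixed vertex via \Cref{claim:break-connect}, part~1 — wait, more precisely the endpoints $v,u\notin F$, so for every $w\in F$ we have $w\notin\{u,v\}$ and $\delta^+$ is preserved at $\{w\}$).

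The subtle part of correctness is completeness: I must show that if $D'$ is a $k$-connected orientation coinciding with $D$ on $F$ and with, say, $\delta^+_{D'}(v)=\delta^+_D(v)-j<\delta^+_D(v)$, then the \FReverseMinus{} recursion actually reaches an orientation $D''$ with $\delta^+_{D''}(v)=\delta^+_{D'}(v)$ still coinciding with $D'$ on $F$, so that the inductive hypothesis on the subtree rooted at \FEnODS{$D'',F\cup\{v\}$} finishes the job. This is where \Cref{lem:deg-diff-bis} enters: since $\delta^+_D(v)>\delta^+_{D'}(v)$, there is $u$ with $\delta^+_D(u)<\delta^+_{D'}(u)$ and $(v,u)$ flippable in $D$; I need $u\in V\setminus F$, which holds because $D$ and $D'$ agree on $F$ (so no vertex of $F$ can have differing outdegree). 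Reversing some directed path $P_{vu}$ decreases $\delta^+(v)$ by one and increases $\delta^+(u)$ by one; the potential $\sum_{w}|\delta^+_{D^{P_{vu}}}(w)-\delta^+_{D'}(w)|$ strictly decreases, so iterating the argument terminates with an orientation having the same outdegree sequence on $\{v\}\cup F$ as $D'$. One must also check that \FReverseMinus{} does not \emph{overshoot}, i.e. that whenever $\delta^+_D(v)>\delta^+_{D'}(v)$ a flippable $(v,u)$ with $u\notin F$ exists, which is precisely the content of \Cref{lem:deg-diff-bis} again, and that the search tree explores \emph{every} branch value $j$ (it does, because \FReverseMinus{} both recurses deeper and immediately calls \FEnODS{$\cdot,F\cup\{v\}$} at each depth). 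Disjointness across the three branches, and across distinct depths of the \FReverseMinus{}/\FReversePlus{} recursion, follows because they produce distinct values of $\delta^+(v)$; within a fixed depth, distinctness follows from the inductive hypothesis applied with $F\cup\{v\}$. The symmetric case $\delta^+_{D'}(v)>\delta^+_D(v)$ uses \Cref{lem:deg-diff} instead.

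For the delay bound I would bound the depth of the recursion tree and the work per node. Each node either fixes a vertex (advancing toward $|V\setminus F|=0$) or reverses one flippable path, which increases $\sum_{w}\delta^+(w)\cdot(\text{something})$... more carefully: along any root-to-leaf path, the number of \FEnODS{} calls is at most $n$, and between two consecutive \FEnODS{} calls the number of \FReverseMinus{} (resp. \FReversePlus{}) calls is at most $m$, since each reverses a path and changes $\delta^+(v)$ monotonically while $0\le\delta^+(v)\le\deg_G(v)\le m$. Hence the tree has depth $O(nm)$. At each node the dominating cost is testing, for each candidate $u\in V\setminus F$, whether $(v,u)$ (or $(u,v)$) is flippable, which by \Cref{lem:computelambda} costs $O(km)$ per candidate and $O(knm)$ over all $u$; finding and reversing the path $P$ is a single BFS in $O(m)$. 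So the work per node is $O(knm)$, and between two outputs the algorithm traverses $O(nm)$ nodes in the worst case, giving total delay $O(knm)\cdot O(nm)$ — but that is $O(kn^2m^2)$, worse than claimed, so I expect the real argument to be more careful: the flippability tests at a node are amortized, or one reuses the fact that after reversing one path only $\lambda(v,u)$ drops by exactly one (\Cref{lem:pathflipping}) so the whole \FReverseMinus{} recursion from a given $D$ runs all its BFS's in total $O(km)$ rather than restarting, and the depth between outputs is really $O(n)+O(m)=O(m)$ \FEnODS{}/reverse steps because along a single descending path the bad case of $m$ reversals happens for one vertex only. The main obstacle, and the step I would spend the most care on, is pinning down this delay analysis so that the product of (nodes between consecutive outputs) and (amortized cost per node, including all flippability tests) comes out to exactly $O(knm^2)$; the combinatorial correctness via \Cref{lem:deg-diff,lem:deg-diff-bis} is comparatively routine once the potential-function argument is set up.
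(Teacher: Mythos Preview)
Your correctness argument follows the same two-level induction as the paper: outer induction on $|V\setminus F|$, inner argument that \FReverseMinus/\FReversePlus reach every required value of $\delta^+(v)$ while preserving agreement on $F$. One inaccuracy: the potential $\sum_w|\delta^+(w)-\delta^+_{D'}(w)|$ need \emph{not} strictly decrease, because the algorithm may choose a flippable endpoint $u$ with $\delta^+(u)\ge\delta^+_{D'}(u)$ (the lemma only guarantees that \emph{some} good $u$ exists, not that the algorithm picks it), in which case the potential is unchanged. This does not actually hurt you: the correct inner induction parameter, as in the paper, is simply $|\delta^+_{D'}(v)-\delta^+(v)|$, which drops by exactly one at every reversal regardless of which $u$ is chosen, and agreement on $F$ is preserved because $u\notin F$. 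Drop the global potential and the argument is clean.

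The genuine gap is the delay analysis, which you yourself flag as unresolved. Your first estimate of depth $O(nm)$ overcounts, and your speculative fixes (amortizing the flippability tests, or ``the bad case of $m$ reversals happens for one vertex only'') are not the right mechanism. The paper's observation is simply this: along any root-to-leaf path, the vertices $v_1,\ldots,v_n$ are processed in order, and while processing $v_i$ the number of \FReverseMinus/\FReversePlus calls is at most $\deg_G(v_i)$ because each such call changes $\delta^+(v_i)$ by one and $0\le\delta^+(v_i)\le\deg_G(v_i)$. Summing gives at most $\sum_i\deg_G(v_i)=2m$ reversal nodes plus $n$ \FEnODS nodes, hence depth $O(m)$. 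Combined with your correct $O(knm)$ cost per node (at most $n$ flippability tests at $O(km)$ each, plus one $O(m)$ BFS), this yields the claimed $O(knm^2)$ delay directly, with no amortization needed.
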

\begin{proof}
We show the first part of the lemma by induction on $|V\setminus F|$. If $|V\setminus F|=0$, then  \FEnODS{$D$,$F$} outputs $\delta^+_D$ and the claim holds. 
Consider now the case $|V\setminus F| > 0$ and let $v\in V\setminus F$ be the next vertex. By induction \FEnODS{$D$,$F\cup\{v\}$} generates every $k$-connected outdegree sequence coinciding with $D$ on $F\cup\{v\}$ exactly once. We have to show that \FReversePlus{$D$,$F$,$v$} (resp. \FReverseMinus{$D$,$F$,$v$})  enumerates all $k$-connected outdegree sequences coinciding with $D$ on $F$ and having outdegree of $v$ larger (resp. smaller) than $\delta^+_D(v)$. Also, we have to show that each of these outdegree sequences will be generated exactly once. Note that this implies that globally each solution is produced exactly once.

Let us prove this for \FReversePlus{$D$,$F$,$v$}. So let $D'$ be a $k$-connected orientation of $G$ such that $\delta_D^+(F)\equiv \delta_{D'}^+(F)$ and $\delta_D^+(v)<\delta_{D'}^+(v)$. By~\Cref{lem:deg-diff} there exists a vertex $u\in V\setminus F$ such that $(u,v)$ is flippable, i.e., for any path $P_{uv}$ the orientation $D^{P_{uv}}$ is $k$-connected, its outdegree sequence coincides with $D$ on $F$ and $\delta_D^+(v)+1=\delta_{D^{P_{uv}}}^+(v)$. 

We proceed by induction on $\delta_{D'}^+(v)-\delta_D^+(v)$ to show that $\delta^+(D')$ is enumerated exactly once. 
So for the base case $\delta_{D'}^+(v)-\delta_D^+(v)=1$ we have $\delta_{D'}^+(v)=\delta_{D^{P_{uv}}}^+(v)$ and by induction $\delta^+_{D'}$ will be enumerated exactly once by the next call of \FEnODS{$D^{P_{uv}}$,$F\cup\{v\}$} and not at all by \FReversePlus{$D^{P_{uv}}$, $F$, $v$} since the latter outputs degree sequences with $\alpha(v)>\delta_{D^{P_{uv}}}^+(v)$. 
Suppose now that $\delta_{D'}^+(v)-\delta_D^+(v)>1$. We have $\delta_{D'}^+(v)-\delta_{D^{P_{uv}}}^+(v)<\delta_{D'}^+(v)-\delta_D^+(v)$, so by induction hypothesis \FReversePlus{$D^{P_{uv}}$,$F$,$v$} enumerates the outdegree sequence of $\delta_{D'}^+(v)$ exactly once.

The analogue proof works for \FReverseMinus using~\Cref{lem:deg-diff-bis}.

For the analysis of complexity note that in each call of \FReversePlus or \FReverseMinus for at most $n$ times it has to be checked if a pair $(u,v)$ is flippable. The latter can be done in time $O(km)$ by~\Cref{lem:computelambda}, finding a directed path from $u$ to $v$ is done in $O(m)$. So a call costs $O(knm)$. 

Finally, the depth of the recursion tree is in $O(m)$. To see this compare the $\delta^+_{D'}$ of a leaf orientation with the $\delta^+_{D}$ of orientation $D$ at the root. Between any two calls of \FEnODS, there will be a sequence of at most $\deg(v)$ calls of \FReversePlus or \FReverseMinus. This way $\delta^+_{D}$ will be approached to $\delta^+_{D'}$ coordinate by coordinate, where previous coordinates are not affected by modifications on latter coordinates. Thus, there are at most $\sum_{v\in V}\deg(v)=2m$ calls and we get an overall time delay of $O(knm^2)$.
\qed
\end{proof}

Note that Algorithm~\ref{algo:tree} has to use a separate method for finding a first $k$-connected orientation $D$ of $G$. This preprocessing step can be done in $O(k^3n^3+kn^2m)$~\cite{IwataK10}. On the other hand, recall that in a $k$-connected orientation we have $kn \leq m$. Therefore, together with~\Cref{thm:sequences} we obtain:
\begin{theorem}\label{thm:enumeratesequences}
Let $G$ be a graph and $k\in\mathbb{N}$. Algorithm~\ref{algo:tree} enumerates all $k$-connected outdegree sequences of $G$ in $O(knm^2)$ time delay.
\end{theorem}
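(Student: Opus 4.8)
The plan is to combine the preprocessing step for finding an initial orientation with~\Cref{thm:sequences}, and then check that the preprocessing cost is absorbed into the claimed delay. First I would observe that the \textbf{Begin} block of Algorithm~\ref{algo:tree} does only two things: it tries to compute a $k$-connected orientation $D$ of $G$, and, if one is found, it calls \FEnODS{$D$, $\emptyset$}. If $G$ has no $k$-connected orientation, the algorithm outputs nothing, which is correct, since in that case $G$ has no $k$-connected outdegree sequence. Computing the initial orientation $D$ (or deciding that none exists) can be done in time $O(k^3n^3+kn^2m)$ by the algorithm of~\cite{IwataK10}.

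Next I would settle correctness. By definition, every $k$-connected outdegree sequence $\alpha$ of $G$ is the outdegree sequence of some $k$-connected orientation $D'$ of $G$, and $D'$ trivially coincides with $D$ on $F=\emptyset$; conversely, for any $\alpha$, an $\alpha$-orientation coinciding with $D$ on $\emptyset$ is $k$-connected exactly when $\alpha$ is a $k$-connected outdegree sequence, by~\Cref{lem:alphakconnected}. Hence the set enumerated by \FEnODS{$D$, $\emptyset$}, which by~\Cref{thm:sequences} consists precisely of the $k$-connected outdegree sequences coinciding with $D$ on $\emptyset$, is exactly the set of all $k$-connected outdegree sequences of $G$, each produced exactly once.

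For the runtime, the delay between two consecutive outputs, as well as the time after the last output, is $O(knm^2)$ directly by~\Cref{thm:sequences}; the only remaining point is the time before the first output, which is the preprocessing time plus at most one delay of \FEnODS. Here I would use that $G$ admits a $k$-connected orientation $D$, so that $m=\sum_{v\in V}\delta^+_D(v)\ge kn$, i.e. $kn\le m$. This gives $k^3n^3=(kn)^3\le (kn)^2m\le (kn)m^2=knm^2$ and $kn^2m=(kn)\,n\,m\le m\cdot n\cdot m\le knm^2$, so the preprocessing runs in $O(knm^2)$ as well. Combining, the overall time delay of Algorithm~\ref{algo:tree} is $O(knm^2)$.

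Since all the genuine work is contained in~\Cref{thm:sequences}, there is no real obstacle; the one thing to be careful about is that the definition of delay includes the time preceding the first output, so one must explicitly verify that the $O(k^3n^3+kn^2m)$ preprocessing does not dominate $O(knm^2)$ — which is exactly where the inequality $kn\le m$, valid whenever $G$ has a $k$-connected orientation, is used.
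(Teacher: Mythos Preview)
Your proof is correct and follows essentially the same route as the paper: invoke \Cref{thm:sequences} for correctness and the $O(knm^2)$ delay of \FEnODS{$D$,$\emptyset$}, then use the inequality $kn\le m$ (valid whenever a $k$-connected orientation exists) to absorb the $O(k^3n^3+kn^2m)$ preprocessing cost from~\cite{IwataK10} into the stated delay bound. The paper's argument is the same, just stated more tersely in the paragraph preceding the theorem.
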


\section{$k$-connected orientations}\label{sec:together}

Putting the above together we obtain an algorithm to enumerate $k$-connected orientations.

\begin{algorithm}[H] \label{algo:final}
    \caption{Simple enumeration of $k$-connected orientations}
    \LinesNumbered
    \BlankLine
    \KwIn{A graph $G=(V,E)$, an integer $k$}
    \KwOut{All $k$-connected orientations of $G$}
    \BlankLine
    \SetAlgoLined
    
\SetKwFunction{FEnODSPrime}{EnODS'}
\SetKwFunction{FReverseMinus}{$\text{Reverse}^-$}
\SetKwFunction{FReversePlus}{$\text{Reverse}^+$}
\SetKwFunction{FEnOPODS}{EnOPODS}
\SetKwProg{Fn}{Function}{:}{end}
      \Begin{
        \SetAlgoVlined
        \If{there exists a $k$-connected orientation $D$ of $G$}
            {Fix an arbitrary linear order on $V$\;
                \FEnODSPrime{$D$, $\emptyset$}\;
            }
      }
        \bigskip
        \Fn{\FEnODSPrime{$D$, $F$}}
        {\SetAlgoVlined
            \eIf{$F \neq V$}
            { take the smallest $v\in V\setminus F$\;
                \FReverseMinus{$D$, $F$, $v$} \;
                \FReversePlus{$D$, $F$, $v$} \;
                \FEnODSPrime{$D$, $F\cup\{v\}$}\;
            }
            {\FEnOPODS{$D$, $\emptyset$}\;}
        }
\end{algorithm}

We need the following easy result for analyzing the amortized complexity:

\begin{lemma}\label{lem:sizeOalpha}
Let $G=(V,E)$ be a graph and $\alpha$ be a $k$-connected out-degree sequence, then $|\mathcal{O}_{\alpha}(G)|\geq(k-1)n+2$.
\end{lemma}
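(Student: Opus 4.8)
The plan is to exhibit a long chain of distinct $\alpha$-orientations inside $\mathcal{O}_\alpha(G)$, starting from one fixed $k$-connected orientation $D$ with $\delta_D^+ = \alpha$. The natural engine for producing many elements of $\mathcal{O}_\alpha(G)$ is Lemma~\ref{lem:deg-id}: reversing any directed cycle in a current orientation keeps us inside $\mathcal{O}_\alpha(G)$. So the goal reduces to finding a structure in $D$ that guarantees at least $(k-1)n+1$ "independent" cycle reversals leading to pairwise distinct orientations.

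First I would observe that since $D$ is $k$-connected, between any two vertices $u,v$ there are $k$ arc-disjoint directed paths from $u$ to $v$ and $k$ arc-disjoint directed paths from $v$ to $u$; concatenating one path each way gives $k$ "almost disjoint" closed directed walks through $u$ and $v$, and more carefully one extracts directed cycles. Concretely, fix any vertex $v_0$. For each other vertex $w$ (there are $n-1$ of them), I would use $k$-arc-connectivity to find $k$ arc-disjoint directed paths from $w$ to $v_0$ and combine them with a returning structure to obtain directed cycles through $w$; reversing such a cycle changes $D$ into a different orientation (it changes the orientation of at least one arc), and stays in $\mathcal{O}_\alpha(G)$. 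The bookkeeping I would actually want is: the set $A \setminus A'$ of arcs whose orientation differs between $D$ and a target $D'$ determines $D'$, so two orientations are equal iff they flip exactly the same arc set relative to $D$; hence it suffices to produce $(k-1)n+1$ distinct nonempty "flippable" arc sets (each a disjoint union of directed cycles).

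A cleaner route — and the one I would try to push through first — is an incremental argument mirroring the style of Lemma~\ref{lem:deg-diff}: build orientations $D = D_0, D_1, D_2, \dots, D_N$ in $\mathcal{O}_\alpha(G)$ where each $D_{i+1}$ is obtained from $D_i$ by reversing a single directed cycle, each cycle reversal is "new" in the sense that it touches an arc no earlier reversal touched (so all the $D_i$ are pairwise distinct), and $N \ge (k-1)n+1$. To get the count $(k-1)n$, the idea is that around each vertex $w \ne v_0$ one can find $k-1$ arc-disjoint directed cycles through $w$ avoiding already-used arcs — this is where $k$-connectivity gives the slack: after peeling off one "used" path we still have $\ge k-1$ arc-disjoint $w$-to-$v_0$ paths, and closing them up yields $\ge k-1$ arc-disjoint cycles. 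Summing $k-1$ over the $n-1$ vertices $w\ne v_0$ and adding the base orientation $D$ itself, plus being slightly more generous at one vertex to reach the "$+2$", gives $|\mathcal{O}_\alpha(G)| \ge (k-1)(n-1) + \text{(corrections)} \ge (k-1)n+2$; I expect the exact constant to come out by handling $v_0$ or one extra cycle carefully.

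The main obstacle will be ensuring genuine distinctness and disjointness simultaneously: arc-disjoint $w$-to-$v_0$ paths for different $w$ need not be arc-disjoint from each other, and the "returning" arcs used to close walks into cycles may collide across vertices, so a naive sum over vertices could overcount or produce coinciding orientations. I would resolve this either by working one strongly connected "layer" at a time (reverse cycles far from previously modified arcs) or — probably simpler — by choosing, for each vertex in turn, cycles living in the current digraph and only needing that each step flips at least one previously-unflipped arc, which is automatic if the cycle uses an arc incident to $w$ that still has its original $D$-orientation. Getting a clean combinatorial statement of "still $\ge k-1$ arc-disjoint cycles through $w$ after removing previously used arcs" via Menger (Theorem~\ref{thm:Menger}) is the technical heart; everything else is counting.
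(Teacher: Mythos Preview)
Your plan has a genuine gap, and it is exactly the one you flag yourself: you never establish the ``clean combinatorial statement'' that for each vertex $w$ you can still find $k-1$ arc-disjoint cycles through $w$ after removing previously used arcs. Without that, the count $(k-1)(n-1)$ is just a heuristic, and your own remark that paths for different $w$ need not be arc-disjoint shows why the sum over vertices does not obviously work. Moreover, the chain construction $D_0,D_1,\dots$ is an unnecessary complication: to get distinct elements of $\mathcal{O}_\alpha(G)$ you do not need the reversed cycles to be arc-disjoint, nor do you need to iterate. If $C$ and $C'$ are two distinct directed cycles of $D$ (distinct as arc sets), then $D^{C}$ and $D^{C'}$ are already distinct orientations, since $D^{C}$ differs from $D$ exactly on $C$. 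So the whole problem reduces to: how many \emph{distinct} directed cycles does a $k$-connected digraph have?

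The paper answers this in one line of linear algebra, bypassing all Menger-type bookkeeping. The cycle space of a weakly connected digraph on $n$ vertices and $m$ arcs has dimension $m-n+1$; for a strongly connected digraph this space is generated by (the signed incidence vectors of) its \emph{directed} cycles. Hence $D$ has at least $m-n+1$ directed cycles. Since $D$ is $k$-connected, every vertex has outdegree at least $k$, so $m\ge kn$ and therefore $m-n+1\ge (k-1)n+1$. Together with $D$ itself this gives $|\mathcal{O}_\alpha(G)|\ge (k-1)n+2$. Your Menger-based approach might be salvageable, but it is strictly harder than what is needed; the missing idea is to count cycles globally via the cycle space rather than locally via paths through each vertex.
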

\begin{proof}
Let $D\in \mathcal{O}_{\alpha}(G)$. We will show that $D$ contains at least $(k-1)n+1$ directed cycles. Since for each directed cycle $C$, the orientation $D^C$ is a different element of $\mathcal{O}_{\alpha}(G)$, we obtain the result.

The \emph{cycle space} of $D$ is the set of vectors in $\mathbb{R}^m$ that are linear combinations of \emph{signed incidence vectors} of cycles of $D$. Given a cycle $C$ with a direction of traversal its signed incidence vector has an entry for every arc of $D$, which is $1$ if $a$ is traversed forward, $-1$ if it is traversed backward and $0$ if $a\notin C$. It is well known, that the cycle space of a strongly connected $D$ can be generated by linear combinations of the vectors associated to its directed cycles, see e.g.~\cite{GLS2003}. Moreover, it can be found in most books on (algebraic) graph theory that the dimension of the cycle space of a weakly connected digraph is $m-n+1$, see e.g.~\cite{GR01,KK19}. In particular, $D$ has at least $m-n+1$ directed cycles. Since $D$ is $k$-connected it has at least $kn$ edges. Therefore $D$ contains at least $(k-1)n+1$ directed cycles.  
\qed
\end{proof}

\begin{theorem}
Let $G$ be a graph and $k\in\mathbb{N}$. Algorithm~\ref{algo:final} enumerates all $k$-connected orientations of $G$ with $O(knm^2)$ time delay. If $k\geq 2$ the amortized time is in $O(m^2)$.
\end{theorem}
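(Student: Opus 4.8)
The plan is to combine the two enumeration subroutines established earlier and argue both the delay bound and (for $k\geq 2$) the amortized bound.

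\medskip

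\noindent\textbf{Correctness and delay.} First I would observe that Algorithm~\ref{algo:final} is literally Algorithm~\ref{algo:tree} with the base case \texttt{Output $\delta^+_D$} replaced by the call \FEnOPODS{$D$,$\emptyset$}. So the backtrack tree of Algorithm~\ref{algo:final} is obtained from that of Algorithm~\ref{algo:tree} by grafting, at each leaf (which corresponds to a fully determined $k$-connected outdegree sequence $\alpha$ together with a concrete $D\in\mathcal{O}_\alpha(G)$), the backtrack tree of \FEnOPODS{$D$,$\emptyset$}. By \Cref{thm:sequences}, each $k$-connected outdegree sequence $\alpha$ is reached exactly once and with a witness orientation $D\in\mathcal O_\alpha(G)$ (note that by \Cref{lem:alphakconnected} every such $D$ is $k$-connected, so the flippability tests make sense and the orientation handed to \FEnOPODS{} is $k$-connected). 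By \Cref{thm:alpha}, \FEnOPODS{$D$,$\emptyset$} then enumerates exactly $\mathcal O_\alpha(G)$, each element once. Since the $\mathcal O_\alpha(G)$ partition the set of all $k$-connected orientations of $G$ as $\alpha$ ranges over the $k$-connected outdegree sequences, every $k$-connected orientation is output exactly once. For the delay: inside the ``outdegree sequence'' part of the tree the analysis of \Cref{thm:sequences} gives delay $O(knm^2)$, and inside each grafted $\alpha$-part \Cref{thm:alpha} gives delay $O(m^2)$; the time to pass from one part to the other is dominated by these. The preprocessing (finding an initial $k$-connected $D$) costs $O(k^3n^3+kn^2m)$, but since $kn\le m$ in any $k$-connected orientation this is $O(knm^2)$ as well. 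Hence the delay is $O(knm^2)$.

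\medskip

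\noindent\textbf{Amortized time for $k\geq 2$.} The idea is that the expensive part of the computation happens in the ``outdegree sequence'' skeleton (Algorithm~\ref{algo:tree}), whose running time we must amortize against the (many) orientations produced in the grafted $\alpha$-subtrees. Write $T_{\mathrm{seq}}$ for the total time spent in the skeleton, i.e., in all \FEnODSPrime, \FReversePlus, \FReverseMinus calls, and $T_{\alpha}$ for the total time spent inside all \FEnOPODS calls. The total output size is $N=\sum_\alpha |\mathcal O_\alpha(G)|$, the sum over all $k$-connected outdegree sequences $\alpha$, and we want $(T_{\mathrm{seq}}+T_\alpha)/N = O(m^2)$. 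For $T_\alpha$: by \Cref{thm:alpha} each \FEnOPODS{$D$,$\emptyset$} spends $O(m^2)$ time per node of its backtrack tree, and that tree has $O(|\mathcal O_\alpha(G)|)$ nodes (a binary tree whose leaves are the elements of $\mathcal O_\alpha(G)$ — or more carefully, at most $2|\mathcal O_\alpha(G)|-1$ nodes since every internal node has two children by the proof of \Cref{thm:alpha}). Hence $T_\alpha = \sum_\alpha O(m^2\cdot |\mathcal O_\alpha(G)|) = O(m^2 N)$, which amortizes to $O(m^2)$ with no use of $k\ge2$.

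\medskip

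\noindent\textbf{The bottleneck: amortizing $T_{\mathrm{seq}}$.} This is where $k\geq 2$ and \Cref{lem:sizeOalpha} come in, and it is the main obstacle. The skeleton is a backtrack tree of depth $O(m)$; by the argument in the proof of \Cref{thm:sequences} the number of its leaves $L$ equals the number of $k$-connected outdegree sequences, and each of its $O(m)$-long root-to-leaf paths passes through $O(m)$ skeleton calls each costing $O(knm)$, so $T_{\mathrm{seq}} = O(L\cdot m\cdot knm) = O(knm^2 L)$. I would then bound $L$ against $N$ using \Cref{lem:sizeOalpha}: since each $k$-connected outdegree sequence $\alpha$ contributes $|\mathcal O_\alpha(G)|\ge (k-1)n+2$ orientations to the output, we get $N \ge L\cdot((k-1)n+2)$, so $L \le N/((k-1)n+2)$. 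Plugging in, $T_{\mathrm{seq}} = O\!\big(knm^2\cdot N/((k-1)n+2)\big)$. For $k\ge 2$ we have $kn/((k-1)n+2)\le kn/((k-1)n) = k/(k-1)\le 2$, so $T_{\mathrm{seq}} = O(m^2 N)$, and combined with $T_\alpha = O(m^2 N)$ and the one-time preprocessing cost $O(k^3n^3+kn^2m)=O(knm^2)=O(m^2 N)$ (as $N\ge 1$ and $kn\le m$), the total is $O(m^2 N)$, i.e., amortized time $O(m^2)$. The delicate point to get right is the accounting of the number and cost of skeleton calls per leaf — one should double-check that the ``at most $2m$ calls of \FReversePlus/\FReverseMinus per root-to-leaf branch'' bound from \Cref{thm:sequences} is what feeds into $T_{\mathrm{seq}}=O(knm^2 L)$, and that charging the whole skeleton cost to the leaves (rather than, say, to internal skeleton nodes) is legitimate, which it is since the skeleton tree has $O(m)$ depth and branching bounded so that internal nodes are $O(mL)$ in number, each of cost $O(knm)$.
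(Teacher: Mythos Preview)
Your argument is correct and follows essentially the same route as the paper: split the total work into the outdegree-sequence skeleton and the $\alpha$-orientation subtrees, bound the former by $O(knm^2)$ per outdegree sequence and the latter by $O(m^2)$ per orientation, and use \Cref{lem:sizeOalpha} to absorb the factor $kn/((k-1)n+2)\le 2$ when $k\ge 2$. One small slip: in bounding $T_\alpha$ you assert that the \texttt{EnOPODS} backtrack tree has at most $2|\mathcal O_\alpha(G)|-1$ nodes because ``every internal node has two children'', but this is false---nodes where no $(v,u)$-path exists in $D\setminus F$ have a single child---so the tree can have up to $\Theta(m\cdot|\mathcal O_\alpha(G)|)$ nodes; however, your conclusion $T_\alpha=O(m^2N)$ still holds immediately from the $O(m^2)$ delay bound you already invoked, so this does not affect the proof.
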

\begin{proof}
The correctness and the delay follow directly from~\Cref{thm:enumeratesequences} and~\Cref{thm:enumerateOalpha}. Let us compute the amortized time complexity as an average over the delays. Let $s$ be the number of solutions, i.e., the total number of $k$-connected orientations and $t$ be the number of $k$-connected outdegree sequences of $G$. Since by Lemma~\ref{lem:sizeOalpha} for every $k$-connected outdegree sequence $\alpha$ there are at least $(k-1)n+2$ orientations, we have that $t\leq \frac{s}{(k-1)n+2}$. Thus there exist constants $c$ and $c'$, such that the overall runtime of our algorithm is bounded by $cknm^2t+c'm^2s\leq cknm^2\frac{s}{(k-1)n}+c'm^2s=O(m^2)s$, where for the last equality we use $k\geq 2$. Hence the amortized complexity is in $O(m^2)$.
\qed
\end{proof}

\section{Discussion}\label{sec:discuss}
We have given a simple algorithm for enumerating the $k$-arc-connected orientations of a graph with a low amortized time complexity. While polynomial delay algorithms were accessible through the theory of submodular flows before, our result is a contribution to the general quest in enumeration to lower polynomial delays and amortized times as much as possible. Other instances of this quest are the different approaches to enumerate acyclic orientations mentioned in the introduction~\cite{Squire98,Barbosa,Conte2018} and the major open problem of constant amortized time enumeration of the elements of a distributive lattice~\cite{Pruesse1993}. And by~\cite{F04} our enumeration of $\alpha$-orientations can be seen as related to the latter problem.

The weakness of our enumeration algorithm for $k$-arc-connected orientations is that finding the initial solution, i.e., finding a $k$-connected orientation of a graph, is  algorithmically the most complicated part. The best implementations using splitting off techniques are rather simple and lead to computation time of roughly $O(n^5)$~\cite{Gabow94,LauY13}, it would be interesting to find simpler methods.

Our original motivation was the hunt for counterexamples to a conjecture of Neumann-Lara~\cite{NeumannLara85} stating that the vertex set of every oriented planar graph (that is having no directed cycles of length 2) can be vertex-partitioned into two subsets each inducing an acyclic digraph. It is not hard to see that a minimal counterexample to this conjecture has to be a $2$-arc-connected (planar) digraph. However, with a little more work one can actually prove that a minimal counterexample has to be \emph{$2$-vertex-connected}, i.e., at least two vertices have to be removed in order to destroy strong connectivity. For illustration, the left and middle orientations in Figure~\ref{fig:orientations} have vertex-connectivity $1$ while the one on the right has vertex-connectivity $2$. Indeed, using SageMath~\cite{sagemath} one can compute that among the 830918 strong orientations of that graph, only 3842 are $2$-arc-connected, and 3734 of those are $2$-vertex-connected. This also gives an idea, how inefficient it would be to generate all strong orientations and filter with respect to $2$-arc-connectivity.

Deciding if a graph admits a $k$-vertex-connected orientation can be done in polynomial time if and only if $k\leq 2$ (unless P=NP), see~\cite{Thomassen2015,DurandDeGevigney2012}. Thus, this enumeration problem would be viable only for $k=2$. As shown by the middle and right orientation in Figure~\ref{fig:orientations} having the same $\alpha$ does not guarantee the same vertex-connectivity. Thus, our approach does not carry through in this setting. In~\cite{Bang-Jensen2018} it is posed as open problem whether it can be decided in polynomial time, if a mixed graph can be oriented such that it is $2$-vertex-connected. A positive answer would allow polynomial time enumeration as in Algorithm~\ref{algo:submod}. Note that concerning our initial motivation it would be interesting to answer this question even restricted to planar graphs.

A dual analogue of $k$-arc-connectivity could be called \emph{$k$-acyclicity}, where at least $k$ arcs of $D$ have to be contracted in order to destroy its acyclicity. As mentioned in the introduction, acyclic orientations can be enumerated with polynomial delay. On the other hand, it is easy to see that a graph $G$ admits a $2$-acyclic orientation if and only if $G$ is the cover graph of a poset. The corresponding recognition problem is NP-complete~\cite{Brightwell1993} and the proof can be extended to see that testing whether $G$ admits a $k$-acyclic orientation is NP-complete for any $k\geq 2$. 
Using~\cite[Theorem 13]{Cre-16} one can show that enumerating $k$-acyclic orientations (as well as $k$-vertex-connected orientations for $k>2$) are DelNP-hard under D-reductions and IncNP-hard under I-reduction.
Since with an NP-oracle one can decide if a given partial orientation extends to one of these types, a backtrack search algorithm like Algorithm~\ref{algo:submod} yields that the above problems can be solved  with delay a polynomial number of calls of an NP-oracle. Thus these problems are in the class DelNP (and a fortiori IncNP), and therefore  are indeed complete in both settings.

A way of relaxing acyclicity is to consider orientations of \emph{digirth} at least $d$, i.e., all directed cycles are of length at least $d$. We believe that in this setting polynomial delay enumeration should be possible, maybe even when combined with an arc-connectivity requirement.

\subsubsection*{Acknowledgements} We wish to thank Nadia Creignou and Fr\'ed\'eric Olive for fruitful discussions in an early stage of this paper. A preliminary version of the results obtained in this work was presented at \emph{WEPA 2018: Second Workshop on Enumeration Problems and Applications} which was held in Pisa on November 2018. The authors wish to thank the organizers of this workshop.

\bibliographystyle{plain}
%{\small
\bibliography{biblio.bib}
%}

\end{document}